\theoremstyle{plain}
\newtheorem{theorem}{Theorem}[section]
\newtheorem{proposition}[theorem]{Proposition}
\newtheorem{lemma}[theorem]{Lemma}
\theoremstyle{definition}
\newtheorem{remark}[theorem]{Remark}
\newtheorem{definition}[theorem]{Definition}
\DeclareMathOperator{\Aut}{Aut}
\DeclareMathOperator{\ch}{\chi}
\DeclareMathOperator{\diag}{diag}
\DeclareMathOperator{\Ext}{Ext}
\DeclareMathOperator{\ext}{ext}
\DeclareMathOperator{\ho}{h}
\DeclareMathOperator{\Hom}{Hom}
\DeclareMathOperator{\gengeom}{p_g}
\DeclareMathOperator{\GL}{GL}
\DeclareMathOperator{\I}{I}
\DeclareMathOperator{\id}{id}
\DeclareMathOperator{\irr}{q}
\DeclareMathOperator{\Mor}{Mor}
\DeclareMathOperator{\N}{N}
\DeclareMathOperator{\Obj}{Obj}
\DeclareMathOperator{\Pic}{Pic}
\DeclareMathOperator{\Spec}{Spec}
\newcommand{\Ho}{\mathord{\mathrm H}}
\newcommand{\T}{\mathord{\mathrm T}}
\DeclareMathOperator{\Tors}{Tors}
\DeclareMathOperator{\sO}{\mathcal{O}}
\DeclareMathOperator{\sK}{\mathcal{K}}
\DeclareMathOperator{\bA}{\mathbb{A}}
\DeclareMathOperator{\bC}{\mathbb{C}}
\DeclareMathOperator{\bP}{\mathbb{P}}
\DeclareMathOperator{\bZ}{\mathbb{Z}}
\newcommand{\code}[1]{\texttt{#1}}
\newcommand{\name}[1]{\textit{#1\/}}
\newcommand{\obar}[1]{\bar{#1}}
\let\oldsqcup\sqcup{}
\renewcommand{\sqcup}{\mathbin{\,\oldsqcup\,}}
\let\oldbigsqcup\bigsqcup{}
\renewcommand{\bigsqcup}{\mathop{\,\oldbigsqcup\,}}
\newcommand{\affmod}{\tilde{M}} 
\newcommand{\schmod}{M} 
\newcommand{\stamod}{\mathcal{M}} 
\newcommand{\trumod}{\mathfrak{M}} 
\title[Automorphisms of numerical Godeaux surfaces\dots]{Automorphisms of numerical Godeaux surfaces with torsion of order 3, 4, or 5}
\author{Stefano Maggiolo}
\begin{document}
  \begin{abstract}
    We compute the automorphisms groups of all numerical Godeaux surfaces, i.e.\ minimal smooth surfaces of general type with $\sK^2 = 1$ and $\gengeom = 0$, with torsion of the Picard group of order $\nu$ equals $3$, $4$, or $5$. We present explicit stratifications of the moduli spaces whose strata correspond to different automorphisms groups.
    
    Using the automorphisms computation, for each value of $\nu$ we define a quotient stack, and prove that for $\nu = 5$ this is indeed the moduli stack of numerical Godeaux surfaces with torsion of order $5$. Finally, we describe the inertia stacks of the three quotient stacks.
  \end{abstract}
  
  \maketitle
  \setcounter{tocdepth}{1} 
  \tableofcontents
  
  \section{Introduction}
    \emph{Numerical Godeaux surfaces\/} are the algebraic surfaces of general type with the smallest invariants, $\sK^2 = 1$ and $\gengeom = 0$. For this reason they have been studied thoroughly in the history of the classification of algebraic surfaces. Conjectured to be rational by Max Noether as a subclass of the surfaces with $\gengeom = 0$ and $\irr = 0$, they take their name from Lucien Godeaux: in 1931, he constructed one of them, providing the first example of minimal surface of general type with $\gengeom = 0$. This particular example is called \emph{Godeaux surface}.
    
    A first classification appears in~\cite{M1976} by Miyaoka: numerical Godeaux surfaces are split in five classes up to their torsion group. In~\cite{R1978}, Reid constructs the moduli spaces of the three classes with larger torsion group. Up to now, even if several examples of surfaces in the other two classes are known, there are no similar constructions for them.

    Recently, another viewpoint has been pursued: the observation that all sporadical examples of numerical Godeaux surfaces with small torsion group admit an involution led to the study of numerical Godeaux surfaces with an involution. This study has been completed by Calabri, Ciliberto and Mendes Lopes in~\cite{CCM2007}, who prove a classification theorem for such surfaces. The following step, classification of numerical Godeaux surfaces with an automorphism of order three, has been completed by Palmieri in~\cite{P2008}, who found that there are no such surfaces.
    
    Halfway between the two viewpoints, we consider the problem of finding all the automorphisms of the surfaces for which the moduli spaces is known, i.e.\ the ones with large torsion group. Using the constructions found by Reid, we are able to compute explicitly the automorphisms groups of such surfaces. The results are then split into strata of the moduli spaces and gathered in the three tables~\ref{tab:z5},~\ref{tab:z4}, and~\ref{tab:z3}. An interesting observation is that the surfaces with the lower torsion amongst the one we consider are all rigid, that is, they do not admit any nontrivial automorphism.
    
    We observe that the way in which the automorphisms are computed reminds of the way one construct a quotient stack. Indeed, we prove that the moduli stack of numerical Godeaux surfaces with torsion group of order five is a quotient stack. We do this using the automorphisms computation, so we are able to describe explicitly the structure of this stack.
    
    The paper is organized as follows. In \S~\ref{sec:godeaux}, we recall the construction of the moduli spaces of numerical Godeaux surfaces with large torsion group given by Reid. In \S~\ref{sec:computing} we present the strategy used to compute the automorphisms groups, giving a more detailed description for the easier case as an example; also, we describe briefly the computer program we wrote to do most part of the work. We collect the results obtained in \S~\ref{sec:results}. In \S~\ref{sec:stacks} we define the moduli stacks of numerical Godeaux surfaces and prove that the one of surfaces with torsion of order five is a quotient stacks. In \S~\ref{sec:inertia} we compute the inertia stacks of the quotient stacks coming from the automorphisms computation.

    \paragraph{\textbf{Acknowledgements}}
    The computation of the automorphisms groups of numerical Godeaux surfaces of order $4$ or $5$ are part of my Master Thesis at University of Pisa. I would like to heartily thank Rita Pardini, who introduced me to the problem and followed me carefully during and after the development of the Thesis.
    
    I would like to acknowledge my current advisor, Barbara Fantechi, for suggesting to me the topic of the second part of this work and helping me to develop it.
    
    I am also very grateful to Miles Reid for helpful discussions.

    \paragraph{\textbf{Notations}}
    A \emph{surface\/} will be a (smooth) projective algebraic variety of dimension $2$, defined over $\bC$. If $S$ is a surface, we will denote with $\sK_S$ its canonical divisor; the \emph{geometric genus\/} of $S$ is $\gengeom(S) = \ho^0(S, \sK_S) = \ho^2(S, \sO_S)$ and its \emph{irregularity\/} is $\irr(S) = \ho^1(S, \sK_S) = \ho^1(S, \sO_S)$. The \emph{characteristic\/} of $S$ is $\ch(S) = 1 - \irr(S) + \gengeom(S)$. The \emph{torsion group\/} $\Tors(S)$ is the torsion subgroup of $\Pic(S)$. The permutation matrix associated to the permutation $\sigma$ is denoted as $P_\sigma$.
  
  \section{Numerical Godeaux surfaces}\label{sec:godeaux}
    In this section we will recall briefly what a numerical Godeaux surface is and how to construct the coarse moduli space for numerical Godeaux surfaces with torsion isomorphic to $\bZ_5$, $\bZ_4$ and $\bZ_3$. We will follow the article~\cite{R1978} with some insights from the newer work~\cite{R2000}. Let us start with the raw definition.
    
    \begin{definition}
      A \emph{numerical Godeaux surface\/} is a minimal smooth surface of general type $S$ with $\sK_S^2 = 1$, $\gengeom(S) = \irr(S) = 0$ so that $\ch(S) = 1$. For brevity, in the following we will write simply \emph{Godeaux surfaces\/} for them.
    \end{definition}
    
    Already in~\cite{M1976} Godeaux surfaces are classified by their torsion group, which is a group of order less or equal to $5$. In~\cite{R1978}, it is also proved that it cannot be $\bZ_2 \times \bZ_2$, therefore there are only five possibilities: the cyclic groups of order from $5$ down to $2$, and the trivial group. We restrict our attention to the three classes with larger torsion groups, so that our main tool will be the study of the universal Galois cover (that is constructed via the torsion group), and the relations between the canonical ring of a Godeaux surface and of its cover. To fix notations, from now on $S$ will be a Godeaux surface and $\psi\colon X \to S$ the cover associated to its torsion group $G$.
    
    \subsection{Torsion of order five}\label{subsection:five}
      This is a basic computation, since we can easily find the invariants of $X$ and check that in particular $\sK_X^2 = 2 \gengeom(X) - 3$, i.e.\ $X$ is a Horikawa surface. Then from~\cite{H1976} we know that $X$ is birational to a quintic hypersurface in $\bP = \bP(x_1, x_2, x_3, x_4)$, with $\deg x_i = 1$, by the canonical map $\phi\colon X \to \obar{X} \subset \bP$. Moreover, $\obar{X}$ has at most rational double points as singularities.
    
      The group $G$ acts naturally on $X$ and on $\Ho^0(\sK_X)$; so $\Ho^0(\sK_X)$ is a $G$-module and we know that this $G$-module decomposes as the direct sum of the four nontrivial characters of $G$ (see for example Proposition 2.4 in~\cite{MP2008}). Therefore we may assume that the action $\rho$ of $G$ on $\bP$ is fixed and generated by the automorphism $\diag(\xi, \xi^2, \xi^3, \xi^4)$, where $\xi$ is a fixed primitive fifth root of unity. Moreover, we will often identify $G$ with its image in $\Aut(\bP)$.
    
      Hence we have to classify all quintic hypersurfaces $\obar{X} \subset \bP$, fixed by this action, with at most rational double points. We will not specify explicitly the locus of quintics that do not satisfy the latter condition; as for the former, we only have to require that the monomials composing the equation of $\obar{X}$ are in the same eigenspace of $\Ho^0(X, 5 \sK_X)$ with respect to the action of $G$. Since $\obar{X}$ cannot pass through the fixed points of the action (which are the coordinate points), in the equation there are necessarily the monomials $x_i^5$, so the eigenspace is fixed to be the one containing these monomials. Summing up, the equation is of this kind:
      \begin{equation}\label{eq:z5}
        \begin{split}
          q_0 &= x_1^5 + x_2^5 + x_3^5 + x_4^5 +\\
          &\qquad + b_1 x_2 x_3^3 x_4 + b_2 x_1^3 x_3 x_4 + b_3 x_1 x_2 x_4^3 + b_4 x_1 x_2^3 x_3 +\\
          &\qquad + c_1 x_2^2 x_3 x_4^2 + c_2 x_1 x_3^2 x_4^2 + c_3 x_1^2 x_2^2 x_4 + c_4 x_1^2 x_2 x_3^2\text{.}
        \end{split}
      \end{equation}

      We have eight affine parameters; to obtain the coarse moduli space, we have to remove the points which give surfaces with singularities worse than rational double points and to quotient by isomorphisms of the correspondent Godeaux surfaces. Such an isomorphism lifts as an isomorphism of $\bP$ which sends the first $\obar{X}$ in the second and commutes with $\rho$. As we will see in Remark~\ref{remark:finite_group}, for any surface there are only finitely many points corresponding to surfaces isomorphic to the first one; this means that we are quotienting by a finite group (this is true also for the next two cases). Its action is far from being free, nevertheless we have the following.
    
      \begin{theorem}
        The coarse moduli space $\schmod_5$ of Godeaux surface with torsion of order $5$ is a finite quotient of a nonempty open subset $\affmod_5$ of $\bA^8$. A point \[(b_1, b_2, b_3, b_4, c_1, c_2, c_3, c_4)\] corresponds to the Godeaux surface obtained resolving the singularities of the quotient by $\rho$ of the variety defined by equation~\eqref{eq:z5} in $\bP$.
      \end{theorem}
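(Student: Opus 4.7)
The plan is to define $\affmod_5$ explicitly inside $\bA^8$ as the locus of parameters for which the associated quintic has at most rational double points, show that every Godeaux surface with torsion of order five arises from some point of $\affmod_5$, and then identify the fibres of this parametrization as the orbits of a finite group action.

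First, I would set $\affmod_5 \subset \bA^8$ to be the complement of the closed locus of $8$-tuples $(b_i, c_i)$ for which $\{q_0 = 0\} \subset \bP$ has a singularity worse than a rational double point; this complement is nonempty since for $b_i = c_i = 0$ the quintic $\{\sum x_j^5 = 0\}$ is smooth. By the discussion preceding the statement, the canonical model $\obar{X}$ of the universal cover of any Godeaux surface $S$ with $\Tors(S) \cong \bZ_5$ embeds in $\bP$ as a quintic of the form~\eqref{eq:z5} with at most rational double points (after a change of coordinates normalizing the $G$-action to $\rho$), so the natural map $\affmod_5 \to \schmod_5$ sending a parameter to the minimal resolution of $\obar{X}/\rho$ is surjective.

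Second, I would identify the equivalence relation on $\affmod_5$. Two parameters give isomorphic Godeaux surfaces iff the corresponding isomorphism lifts to a $G$-equivariant isomorphism of the canonical covers, and hence to a linear automorphism of $\bP$ which normalizes $\rho(G)$ and carries one quintic to the other. The group of such automorphisms, modulo scalars acting trivially on $\bP$, is generated by the diagonal torus commuting with $\rho$ and by the coordinate permutations induced by the $(\bZ_5)^\ast$-action on the characters of $G$. Imposing the normalization that the coefficients of $x_i^5$ all equal $1$ cuts the torus down to the finite subgroup of fifth roots of unity on each coordinate (modulo a common scalar), so the effective group $\Gamma$ acting on $\affmod_5$ is finite.

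The main obstacle is justifying that $\Gamma$ truly captures the full isomorphism equivalence, and in particular that the acting group is finite; this is precisely the content of the forward-referenced Remark~\ref{remark:finite_group}. Granting it, the geometric quotient $\affmod_5/\Gamma$ exists (finite group acting on an affine scheme) and a standard descent argument identifies it with $\schmod_5$ as coarse moduli space: bijectivity on closed points follows from the analysis above, and the universal property with respect to families follows because any family of Godeaux surfaces with $\bZ_5$-torsion over a base $B$ can locally be trivialized to produce a map $B \to \affmod_5$, which in turn descends uniquely to $B \to \schmod_5$.
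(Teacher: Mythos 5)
Your proposal follows essentially the same route as the paper, which presents this theorem as a summary of Reid's construction: the preceding discussion reduces the canonical model of the cover to a quintic of the form~\eqref{eq:z5}, the open set $\affmod_5$ is the locus with at most rational double points (nonempty because the Fermat quintic, i.e.\ the origin, gives the classical Godeaux surface), and the identification of isomorphisms with elements of the finite group $\N_{\Aut(\bP)}(G)$ (normalized so that the coefficients of $x_i^5$ stay equal to $1$) is exactly the content of Lemma~\ref{lemma:parameters are roots} and Remark~\ref{remark:finite_group}. Your sketch is consistent with that argument and correctly flags the finiteness of the acting group as the forward-referenced ingredient.
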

    
    \subsection{Torsion or order four}
      For the next two cases, we will compute generators and relations of the canonical ring of $X$. We may use direct computation using the property of the canonical ring of $S$, or gain help from the numerator of the Hilbert series as noted in~\cite{R2000}. For torsion of order $4$, we need five generators: $x_1$, $x_2$, $x_3$ in degree $1$, and $y_1$, $y_3$ in degree $2$ (the subscripts denote the eigenspace in which the generators lie). So $X$ is naturally embedded in the weighted projective space $\bP = \bP(1^3, 2^2)$. As for the relations, we have two of them in degree $4$, $q_0$ and $q_2$ (again, the subscripts denote the eigenspaces). These generators and relations describe the canonical ring of $X$, and one proves that the bicanonical map $\phi\colon X \to \obar{X} \subset \bP$ is a birational morphism and $\obar{X}$ has at most rational double points. Again, we can fix the action $\rho$ of $G$ (so that it is diagonal) on $\bP$ and exploit its fixed locus to eliminate some parameters from $q_0$ and $q_2$. After some simplifications, they assume these forms:
      \begin{equation}\label{eq:z4}
        \begin{split}
          q_0 &= x_1^4 + x_2^4 + x_3^4 + a x_1^2 x_3^2 + a^\prime x_1 x_2^2 x_3 +\\
          &\qquad + y_1 y_3 + b_1 y_1 x_1 x_2 + b_3 y_3 x_2 x_3 \text{,}\\
          q_2 &= c_1 x_1^3 x_3 + c_3 x_1 x_3^3 +\\
          &\qquad + d_1 x_1^2 x_2^2 + d_3 x_2^2 x_3^2 + y_1^2 + y_3^2\text{.}
        \end{split}
      \end{equation}

      As in the previous case, we have eight parameters, and we have to eliminate the points which give bad singularities and to quotient by the isomorphisms of underlying Godeaux surfaces.
      
      \begin{theorem}
        The coarse moduli space $\schmod_4$ of Godeaux surfaces with torsion of order $4$ is a finite quotient of a nonempty open subset $\affmod_4$ of $\bA^8$. A point \[(a, a^\prime, b_1, b_3, c_1, c_3, d_1, d_3)\] corresponds to the Godeaux surface obtained resolving the singularities of the quotient by $\rho$ of the variety defined by equations~\eqref{eq:z4} in $\bP$.
      \end{theorem}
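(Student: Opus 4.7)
The plan is to follow the three-step template of the torsion-$5$ case, replacing the quintic in $\bP^3$ by a $(4,4)$ complete intersection in $\bP(1^3, 2^2)$. First, I would use either direct manipulation of the canonical ring of $S$, or (following~\cite{R2000}) the numerator of the Hilbert series $\sum_m \dim \Ho^0(X, m\sK_X)\, t^m$, broken up according to the four characters of $G \cong \bZ_4$, to show that the canonical ring of $X$ needs exactly five generators -- three in degree $1$ and two in degree $2$ -- and exactly two relations in degree $4$. The $G$-weight of each generator is forced by the character decomposition of $\Ho^0(X, m\sK_X)$ for $m = 1, 2$: the three nontrivial characters of $\bZ_4$ supply $x_1, x_2, x_3$ in degree $1$, and the weights $1, 3$, in which the symmetric products of the $x_i$ fall one dimension short, supply $y_1, y_3$. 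The two relations must then be $G$-invariant of weights $0$ and $2$, as the subscripts in $q_0, q_2$ record.

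Next I would fix $\rho$ to act diagonally on $\bP$ and write down the most general pair $(q_0, q_2)$ of weighted-degree-$4$ polynomials of the correct weights by enumerating the $G$-eigenbases of degree-$4$ monomials. I would then exhaust the residual coordinate freedom commuting with $\rho$ -- rescalings of the $x_i, y_j$ and substitutions $y_j \mapsto y_j + f_j(x)$ with $f_j$ a weight-$j$ quadratic -- to normalize the coefficients of $x_1^4, x_2^4, x_3^4, y_1^2, y_3^2, y_1 y_3$ to the values prescribed in~\eqref{eq:z4}. What survives is precisely the eight-parameter family~\eqref{eq:z4}; the locus of $\bA^8$ where the resulting $\obar{X} \subset \bP$ has singularities worse than rational double points is closed, and its complement is the desired $\affmod_4$.

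Finally, two points of $\affmod_4$ give isomorphic Godeaux surfaces precisely when they are related by an automorphism of $(\bP, \rho)$ preserving the normalization conditions. Any isomorphism $S \cong S'$ lifts to the étale Galois covers $X \cong X'$, and by birationality of the bicanonical map together with the fact that $\obar{X}, \obar{X}'$ have only rational double points, it is induced by a linear automorphism of $\bP$. The subgroup of $\Aut(\bP)$ that normalizes $\rho$ and respects the normalizations acts on $\affmod_4$; by Remark~\ref{remark:finite_group} each orbit is finite, so the action factors through a finite group and the quotient is the coarse moduli space $\schmod_4$. The main obstacle I foresee is the bookkeeping in the second step: identifying which eight coefficients actually remain after all admissible changes of coordinates, since this depends on the precise $G$-eigenspace decomposition of weighted-degree-$4$ monomials and on the order in which the normalizations are carried out, so that reducing to exactly the form shown in~\eqref{eq:z4} rather than some other eight-dimensional slice is not automatic.
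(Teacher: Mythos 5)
Your proposal is correct and follows essentially the same route as the paper (which itself is recalling Reid's construction): compute the generators and relations of the canonical ring of the cover $X$ via the character decomposition of the $\Ho^0(X,m\sK_X)$, embed $\obar{X}$ in $\bP(1^3,2^2)$ by the bicanonical map, fix a diagonal action of $G$, normalize coefficients by the residual coordinate changes commuting with $\rho$ to reach the eight-parameter family~\eqref{eq:z4}, discard the closed locus of bad singularities, and quotient by the finite group $H_4/G$ of Remark~\ref{remark:finite_group}. The bookkeeping issue you flag in the normalization step is real but is exactly the part the paper also delegates to~\cite{R1978} and~\cite{R2000}.
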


      \subsection{Torsion of order three}
        Using the same methods as before, we need six generators for the canonical ring of $X$: $x_1$, $x_2$ in degree $1$, $y_0$, $y_1$, $y_2$ in degree $2$, $z_1$, $z_2$ in degree $3$. Therefore we should use the tricanonical map to obtain the canonical model of $X$; actually, we can use just the bicanonical map, since one proves that it is a birational morphism to $\bP = \bP(1^2, 2^3)$. The image of this morphism is not a complete intersection; it is described by equations:
      \begin{equation}\label{eq:z3}
        \begin{split}
          q_0 &= x_1 x_2 (y_0^2 - y_1 y_2) - x_1^2 (y_2^2 - y_0 y_1) - x_2^2 (y_1^2 - y_0 y_2) +\\
          &\qquad + a_1 x_1^3 x_2 y_1 + a_2 x_1 x_2^3 y_2 - b_1 x_1^6 + b_{1,2} x_1^3 x_2^3 - b_2 x_2^6 \text{,}\\
          p_0 &= y_0^3 + y_1^3 + y_2^3 - 3 y_0 y_1 y_2 + a_1 x_1^2 y_0 y_1 + a_2 x_2^2 y_0 y_2 -\\
          &\qquad - (a_1+a_2) x_1 x_2 y_1 y_2 + a_1 x_2^2 y_1^2 + a_2 x_1^2 y_2^2 + (b_1+b_{1,2}+b_2) x_1^2 x_2^2 y_0 + \\
          &\qquad + b_2 x_2^4 y_1 - (b_1+b_{1,2}) x_1^3 x_2 y_1 + b_1 x_1^4 y_2 - (b_{1,2}+b_2) x_1 x_2^3 y_2 + \\
          &\qquad + (x_1^3 + x_2^3) S\text{,}\\
          h &= x_1 y_1 (y_2^2 - y_0 y_1) + x_2 y_2 (y_1^2 - y_0 y_2) - a_1 x_1^2 x_2 y_1^2 - a_2 x_1 x_2^2 y_2^2 -\\
          &\qquad (b_1 x_1^3 + b_2 x_2^3) x_1 x_2 y_0 + b_1 x_1^5 y_1 + b_2 x_2^5 y_2 - x_1^2 x_2^2 S\text{,}
        \end{split}
      \end{equation}
      where $S = c_1 x_1^3 + c_2 x_2^3 + d_1 x_1 y_2 + d_2 x_2 y_1$.
      
      Actually, omitting $h$ we have the surface $\obar{X}$ plus three fibers of the projective bundle $\bP \to \bP^1$ (obtained projecting to the first two coordinates), restricted to $(q_0 = 0)$. Moreover, the parameters are not uniquely determined, since they may change by a transformation of the form $x_i \mapsto k x_i$, $y_i \mapsto y_i$, $z_i \mapsto k^{-1} z_i$. Accounting for these transformations, we have the following.
        
      \begin{theorem}
        The coarse moduli space $\schmod_3$ of Godeaux surfaces with torsion of order $3$ is a finite quotient of a nonempty open subset $\affmod_3$ of the weighted projective space $\bP(2^2,4^3,6^2,4^2)$. A point \[[a_1, a_2, b_1, b_{1,2}, b_2, c_1, c_2, d_1, d_2]\] corresponds to the Godeaux surface obtained resolving the singularities of the quotient by $\rho$ of the variety defined by equations~\eqref{eq:z3} in $\bP$.
      \end{theorem}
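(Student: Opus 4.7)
The plan mirrors the strategy used for the previous two theorems, but with the new feature that the canonical-ring generators of $X$ have not been fully rigidified in the normal form~\eqref{eq:z3}: a residual one-parameter family of coordinate rescalings survives, and this is precisely what makes the parameter space weighted projective instead of affine.

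First I would invoke the construction preceding the theorem: every Godeaux surface $S$ with torsion $\bZ_3$ arises by taking the quotient by $\rho$ of some $\obar{X} \subset \bP(1^2, 2^3)$ cut out by equations of the form~\eqref{eq:z3}, and conversely such a parameter tuple gives a Godeaux surface exactly when $\obar{X}$ has only rational double points. This realises the coarse moduli problem, before accounting for the residual symmetries of the equations, as an open subset of $\bA^9$.

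Next I would identify the residual symmetries. Reid's derivation of~\eqref{eq:z3} uses up all but a one-parameter family of the freedom in the choice of $G$-eigenbasis; the remaining freedom is the $\bC^*$-action $\tau_k\colon x_i \mapsto k x_i$, $y_j \mapsto y_j$ mentioned just before the theorem, together with a finite group of discrete symmetries coming from roots of unity and from the outer automorphism of $G$. Applying $\tau_k$ to~\eqref{eq:z3} and tracking the $x$-degree of each monomial, one finds that $q_0$, $p_0$, $h$ get rescaled globally by $k^2$, $1$, $k$ respectively, and the nine parameters transform with weights $(2,2,4,4,4,6,6,4,4)$ in the order $(a_1, a_2, b_1, b_{1,2}, b_2, c_1, c_2, d_1, d_2)$. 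The quotient by this $\bC^*$-action is then exactly the weighted projective space $\bP(2^2, 4^3, 6^2, 4^2)$ in the claimed coordinates.

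Defining $\affmod_3 \subset \bP(2^2, 4^3, 6^2, 4^2)$ as the open locus where the corresponding $\obar{X}$ has only rational double points, the remaining discrete symmetries act as a finite group $\Gamma$, and one obtains $\schmod_3 = \affmod_3/\Gamma$; the finiteness of $\Gamma$ follows from Remark~\ref{remark:finite_group}. The main obstacle is the middle step: carefully verifying that the normalisations already built into~\eqref{eq:z3} leave exactly a one-parameter continuous family of rescalings and not more, so that the quotient is indeed the stated weighted projective space with the stated weights; once this is done, the rest of the argument runs parallel to the $\nu = 5, 4$ cases.
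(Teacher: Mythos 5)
Your proposal is correct and follows essentially the same route as the paper, which states this theorem without a separate proof, deferring to Reid's construction and to the remark immediately preceding it about the residual rescaling $x_i \mapsto k x_i$, $y_i \mapsto y_i$, $z_i \mapsto k^{-1} z_i$; your weight computation $(2,2,4,4,4,6,6,4,4)$ for $(a_1, a_2, b_1, b_{1,2}, b_2, c_1, c_2, d_1, d_2)$ is exactly what justifies the stated $\bP(2^2,4^3,6^2,4^2)$, and the finiteness of the remaining discrete group is handled as in the paper via Remark~\ref{remark:finite_group}. The only detail you elide is the $z_i \mapsto k^{-1} z_i$ part of the rescaling, which is invisible in equations~\eqref{eq:z3} and hence harmless here.
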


  \section{Computing automorphisms}\label{sec:computing}
    This section is the technical heart of the paper: in the first subsection we will discuss the mathematics needed to solve the problem; in the second, as an example, we will apply it to the case of torsion of order $\nu = 5$, without doing any hard computation; in the third we will explain the structure of the program that does the computations, referring to the example for clarifications.
    
    Here we let $S$ be again a Godeaux surface with torsion isomorphic to $\bZ_\nu$ with $\nu \geq 3$, and $\psi\colon X \to S$ its universal Galois cover. Moreover we take $\phi\colon X \to \obar{X} \subset \bP$ to be the canonical (in the case of $\nu = 5$) or bicanonical (in the other cases) birational morphism as constructed in the previous section.
  
    \subsection{The strategy}
      We will use without further mention the following facts.
      \begin{enumerate}
        \item An automorphism of $X \subset \bP$ extends to an automorphism of $\bP$ (in particular is described by a matrix in $\bP \GL(n+1)$).
        \item An isomorphism of two Godeaux surfaces $S_1$ and $S_2$ lifts to an automorphism of the universal covers $X_1$ and $X_2$.
        \item The automorphisms of a universal cover $X$ that pass to the quotient to automorphisms of $S$ are the ones compatible with the action of $G$, i.e.\ the ones in the normalizer of $\Aut(X)$ relative to the action of $G$. The kernel of the map $\N_{\Aut(X)}(G) \to \Aut(S) \to 0$ is simply $G$.
      \end{enumerate}
    
      In every case we studied before, we fixed the action of the torsion group $G$ on the projective space $\bP$; hence the compatibility with the action does not depend on the particular equations of $\obar{X}$. Up to now we can describe $\Aut(S)$ as the quotient by $G$ of $\Aut(X)$, and we can represent elements of $\Aut(X)$ by matrices in $\bP \GL(n+1)$. Firstly, we have to reduce the possibilities for these matrices.
      
      \begin{lemma}
        A matrix $A$ representing an automorphism $\alpha$ of $X$ is a permutation of a diagonal matrix. In particular, the permutation is induced by the action of $\alpha$ on the eigenspaces of $\Ho^0(n \sK_X)$ relative to $G$.
      \end{lemma}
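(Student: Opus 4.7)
My plan is to exploit the $G$-equivariance that $\alpha$ must satisfy in order to descend to an automorphism of $S$. By item~(3) in the list above, the only relevant automorphisms are those in $\N_{\Aut(X)}(G)$, so I assume this throughout; conjugation by $\alpha$ then induces an automorphism $\sigma$ of $G \cong \bZ_\nu$ which, $G$ being cyclic, is multiplication by some unit $k$ modulo $\nu$.

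Next I track the effect of $\alpha$ on the character decomposition of the canonical ring. Each coordinate of $\bP$ is chosen as a generator sitting in a prescribed $G$-eigenspace; if $v$ has character $\chi$, then
\[
  g \cdot \alpha(v) = \alpha\bigl(\sigma^{-1}(g) \cdot v\bigr) = \chi\bigl(\sigma^{-1}(g)\bigr)\, \alpha(v),
\]
so $\alpha(v)$ lies in the eigenspace of character $\chi \circ \sigma^{-1}$. Since $\alpha$ preserves the grading of the canonical ring, its matrix $A$ is already block-diagonal with respect to the weight decomposition of $\bP$.

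The heart of the argument is then the combinatorial observation that, for each pair (weight, character), there is at most one coordinate of $\bP$. This is immediate from the presentations in Section~\ref{sec:godeaux}: for $\nu = 5$ the coordinates $x_1,\dots,x_4$ realise the four non-trivial characters in weight one; for $\nu = 4$, $\{x_1,x_2,x_3\}$ realises the non-trivial characters in weight one and $\{y_1,y_3\}$ realises $\chi_1$, $\chi_3$ in weight two; the case $\nu = 3$ is analogous. Consequently each column of $A$ has exactly one non-zero entry, lying in the row indexed by the unique coordinate of matching (weight, character). Hence $A$ is a permutation of a diagonal matrix, with the permutation induced by $\sigma$ via the identification between coordinates and characters.

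The main subtlety I foresee is that, in the weighted cases, a lift of $\alpha$ to $\Aut(\bP)$ could a priori send a degree-$2$ coordinate $y_j$ to $\lambda y_{j'} + P(x_i)$, with $P$ a degree-$2$ polynomial in the $x_i$, since such a change does not modify $\obar X$. I would handle this by exploiting the normalisations already built into equations~\eqref{eq:z4}--\eqref{eq:z3}---for instance, the coefficient of $y_1 y_3$ in $q_0$ is fixed to be $1$ in~\eqref{eq:z4}---which pin each higher-weight generator down up to scalar; any form-preserving lift of $\alpha$ is then genuinely linear, and the monomial conclusion follows from the previous paragraph.
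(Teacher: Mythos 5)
Your argument is correct and follows essentially the same route as the paper: $G$-equivariance of $\alpha^\star$ forces a permutation of the character eigenspaces, and since each (degree, character) pair carries at most one generator of the canonical ring, the matrix is a permutation of a diagonal one. Your final paragraph on the possible affine shifts $y_j \mapsto \lambda y_{j'} + P(x)$ in the weighted cases is a point the paper's proof silently glosses over, and your normalisation argument is the right way to dispose of it.
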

    
      \begin{proof}
        The automorphism $\alpha$ induces an automorphism $\alpha^\star$ of $\Ho^0(X, n \sK_X)$ for every $n \geq 0$. This gives as a result that $A$ is a block matrix (i.e.\ $\alpha^\star(\bullet_i) = \bullet_j$ for some $j$, where $\bullet$ represents the letter $x$ or $y$, when it makes sense). Since $\alpha$ is by definition compatible with the action of $G$, also $\alpha^\star$ is compatible with the action of $G$ on $\Ho^0(X, n \sK_X)$, so there is an induced permutation of the eigenspaces relative to $G$, i.e.\ $\alpha^\star$ acts on the characters of $G$ as an element of $\Aut(G) \cong G^\star$.

      It is now easy to see that $A$ has to be a permutation of a diagonal matrix; indeed, if $\alpha$ is in the class relative to $g \in \bZ_\nu^\star$, then $\alpha^\star$ sends $\bullet_i$ to $\bullet_{g i}$ and this is well defined since in our cases for every $n$ and every $g$ the eigenspace of $\Ho^0(X, n \sK_X)$ with eigenvalue $g$ is at most $1$-dimensional.\qedhere
      \end{proof}
      
      So, if $\nu = 5$ the possible automorphisms are divided in four classes ($1$, $2$, $3$, and $4$); if $\nu = 4$ there are two classes ($1$ and $3$); if $\nu = 3$ there are two classes ($1$ and $2$). Moreover, for every class we have a fixed permutation and these permutations form a group isomorphic to $\bZ_\nu^\star$. Hence, the automorphisms groups we will find will be semidirect products of some finite group by a subgroup of $\bZ_\nu^\star$.

      Let us denote with $\gamma_\nu$ the number of generators of the canonical ring considered in the previous section, or equivalently the dimension of the projective space in which $\obar{X}$ is embedded. Up to now, to describe an automorphism of $X$ we need $\gamma_\nu-1$ complex parameters (because we work in $\Aut(\bP)$). In the following Lemma, we show that these parameters cannot be generic.
      
      \begin{lemma}\label{lemma:parameters are roots}
        Up to normalizing, the nonzero entries of $A$ are $\nu$-th roots of unity.
      \end{lemma}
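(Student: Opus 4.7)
The plan is to exploit the fact that $\alpha^\star$ sends each defining equation of $\obar{X}$ from Section~\ref{sec:godeaux} to a nonzero scalar multiple of itself, combined with the observation that several monomials in those equations have coefficients fixed to $\pm 1$ independently of the moduli parameters. By the previous lemma we may write $\alpha^\star(x_i) = \lambda_i x_{\sigma(i)}$, and similarly for the higher-weight generators.

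For $\nu = 5$, the equation $q_0$ of \eqref{eq:z5} contains $x_i^5$ with coefficient $1$ for $i = 1, 2, 3, 4$. Setting $\alpha^\star(q_0) = \mu_0 \cdot q_0$ and comparing coefficients forces $\lambda_i^5 = \mu_0$ for every $i$. Since $A$ is only defined up to scalar, rescaling by a fifth root of $\mu_0^{-1}$ yields $\mu_0 = 1$ and each $\lambda_i$ a fifth root of unity.

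For $\nu = 4$ the same strategy is applied to both equations in \eqref{eq:z4}: the monomials $x_i^4$ ($i=1,2,3$) in $q_0$ give $\lambda_i^4 = \mu_0$; the monomial $y_1 y_3$ in $q_0$ gives that the product of the two degree-$2$ entries equals $\mu_0$; and the monomials $y_j^2$ in $q_2$ give that their squares equal a common $\mu_2$. Together these imply all the relevant fourth powers equal $\mu_0^2$, and a weighted rescaling within $\bP\GL$ of $\bP(1^3, 2^2)$, of the form $(x_i, y_j) \mapsto (t x_i, t^2 y_j)$, reduces everything to fourth roots of unity. For $\nu = 3$ the parameter-free monomials $y_j^3$ and $y_0 y_1 y_2$ in $p_0$ give $\mu_j^3 = \mu_0 \mu_1 \mu_2$ for each $j$; the core terms $x_1 x_2 (y_0^2 - y_1 y_2)$ and $x_i^2 (y_j^2 - y_k y_l)$ in $q_0$, together with the core monomials of $h$, then propagate these into multiplicative relations on the $\lambda_i$, after which a single weighted rescaling of $\bP(1^2, 2^3)$ normalizes all cube powers to $1$.

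The main obstacle is the case $\nu = 3$: the defining ideal of $\obar{X}$ is not a complete intersection and only a small number of monomials have parameter-free coefficients, so the argument has to combine constraints from all three equations $q_0$, $p_0$, and $h$ simultaneously and verify that the resulting system of multiplicative relations on the $\lambda_i$ and $\mu_j$ is fully absorbed by the single degree of freedom in the weighted $\bP\GL$ rescaling.
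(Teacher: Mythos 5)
Your treatment of $\nu=5$ and $\nu=4$ is essentially the paper's argument (the paper normalizes $\lambda_2=1$ rather than the multiplier, but this is the same computation), and it is correct. The gap is in the case $\nu=3$, precisely the case you flag as the main obstacle: the parameter-free monomials of $q_0$, $p_0$, and $h$ do \emph{not} suffice, and no amount of combining them will make them suffice. The reason is structural: within each equation, every core monomial has the same number of $y$-factors (two for $q_0$, three for $p_0$, three for $h$). Hence the full one-parameter subgroup $x_i \mapsto x_i$, $y_j \mapsto \mu y_j$ rescales each core by a single scalar ($\mu^2$, $\mu^3$, $\mu^3$ respectively) and therefore preserves the variety cut out by the core terms for \emph{every} $\mu \in \bC^*$. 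This subgroup is not contained in the weighted rescaling $\diag(t,t,t^2,t^2,t^2)$ that generates the kernel of $\GL \to \bP\GL$ of $\bP(1^2,2^3)$; after you spend that single degree of freedom normalizing one of the $\lambda_i$, a genuine continuous parameter $\mu$ survives, and your system of multiplicative relations cannot pin it to a root of unity.

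The missing ingredient is a monomial with a moduli-dependent coefficient that is guaranteed nonzero, and this is a geometric input, not a formal one: the paper invokes Reid's constraint that $a_1 + a_2 \neq 0$ for any actual Godeaux surface with torsion $\bZ_3$ (consistent with Table~\ref{tab:z3}, where the locus $a_1=a_2=b_1=b_2=0$ contains no Godeaux surfaces). The terms $a_1 x_1^2 y_0 y_1$, $a_2 x_2^2 y_0 y_2$, etc.\ in $p_0$ have only two $y$-factors, so matching their coefficients against the multiplier $\mu_0\mu_1\mu_2$ of the cubic core yields a relation of the form $\mu^2\cdot(\text{root of unity}) = \mu^3$, killing the continuous parameter. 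Without citing this (or an equivalent nonvanishing statement), your argument proves only that the entries lie on a one-dimensional orbit, which is strictly weaker than the lemma.
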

      
      \begin{proof}
        We define $\lambda_i$ and $\mu_i$ implicitly by $\alpha^\star x_i = \lambda_i x_{g i}$ and $\alpha^\star y_i = \mu_i y_{g i}$.
    
        \begin{description}
        \item[Case $\nu = 5$] in the equation $q_0$ defining $X$ we have the terms $x_i^5$; hence, to send $q_0$ in a multiple of itself, the ratios of the four parameters must be fifth roots of unity; if we normalize one of them to $1$, the others must be of the form $\xi^{i_g}$.
        \item[Case $\nu = 4$] if we normalize $\lambda_2$ to $1$, we have from $q_0$ that $\lambda_1$, $\lambda_3$, and $\mu_1 \mu_3$ are fourth roots of unity; from $q_2$ we have that $\mu_1^2 = \mu_3^2$, hence also $\mu_1$ and $\mu_3$ are fourth roots of unity. Moreover we also observe that $\mu_3 = \mu_1^{-1}$.
        \item[Case $\nu = 3$] we can normalize $\lambda_1$ to $1$, so that from $p_0$ we have $\mu_1 = \mu \xi^{j_1}$, $\mu_2 = \mu \xi^{j_2}$ and $\mu_0 = \mu \xi^{2 j_1 + 2 j_2}$, and from $q_0$ we have \[ \lambda_2 \mu^2 \xi^{j_1 + j_2} = \mu^2 \xi^{2 j_2} = \lambda_2^2 \mu \xi^{2 j_1}\text{;}\] from these equations, we get $\lambda_2 = \xi^{2 j_1 + j_2}$. We still have a continuous parameter; to kill it, we have to exploit the fact that $a_1 + a_2 \neq 0$ (see from~\cite{R1978}): this allows us to say that $\mu_0^3 = \lambda_2 \mu_1 \mu_2$, i.e.\ $\mu^3 = \xi^{2 j_2}$.\qedhere
        \end{description}
      \end{proof}
      
      Let $P_\sigma$ be the matrix associated to the permutation $\sigma$, corresponding to the multiplication by an element of $\bZ_\nu^\star$. Then Lemma~\ref{lemma:parameters are roots} tells us that the possible matrices $A$ representing an automorphism $\alpha$ of $X$ are of the following forms.
      \begin{description}
        \item[Case $\nu = 5$] $A = \diag(1, \xi^{i_2}, \xi^{i_3}, \xi^{i_4}) P_\sigma$.
        \item[Case $\nu = 4$] $A = \diag(\xi^{i_1}, 1, \xi^{i_3}, \xi^{j_1}, \xi^{-j_1}) P_\sigma$.
        \item[Case $\nu = 3$] taking $k = 2 j_1 + j_2$, $A = \diag(1, \xi^k, \xi^k, \xi^{2k}, 1) P_\sigma$.
      \end{description}
      
      \begin{remark}\label{remark:finite_group}
        Up to now, we did not use that $\alpha(X) = X$; that is, these matrices represent all the isomorphisms between points of $\affmod_\nu$. Let $H_\nu \subset \Aut(\bP)$ be the group consisting of all these matrices; then $H_\nu = \N_{\Aut(\bP)}(G)$ and $\affmod_\nu / H_\nu$ is the coarse moduli space $\schmod_\nu$ of Godeaux surfaces with torsion of order $\nu$. In particular, it is $\bZ_5^3 \ltimes \bZ_5^\star$ for $\nu = 5$, $\bZ_4^3 \ltimes \bZ_4^\star$ for $\nu = 4$ and the symmetric group of order $6$, $\bZ_3 \ltimes \bZ_3^\star$, for $\nu = 3$. Note that the torsion group $G \subset H_\nu$ acts trivially on $\affmod_\nu$.
      \end{remark}
    
      Coming back to automorphisms, we have proved that for a given $X$, there is only a finite group of possible automorphisms. Depending on the actual equations defining $X$, $\Aut(X)$ is a subgroup of that group. In particular, we experience changing of $\Aut(X)$ when some parameters becomes zero or when the ratios of two parameters related by a permutation becomes a $nu$-th root of unity. If we work on the parameters' affine space instead that on the coarse moduli space, i.e.\ on $\affmod_\nu = \bA^8$ for $\nu \in \{5, 4\}$, and on $\affmod_\nu = \bA^9$ for $\nu = 3$, these changes of $\Aut(X)$ happens only in some vector subspaces of $\affmod_\nu$. Even with this simplification, there are eight or nine parameters which gives hundreds of different cases; this is the reason to use a program to automate the computations.
      
    \subsection{Example: the case $\nu = 5$}  
      To explain the program, we present the easier case. When $\nu = 5$, we have four generators, $x_1$, $x_2$, $x_3$ and $x_4$, with degree $1$, with only one relation in degree $5$. An automorphism $\alpha$ of $X$ is represented by a matrix of the form $\diag(1, \xi^{i_2}, \xi^{i_2}, \xi^{i_3}) P_\sigma$, where $\sigma$ is the permutation given by the multiplication by an element of $\bZ_5^\star$. We have to compute the possible automorphisms, one permutation a time.
      
      If $\sigma$ is the identity, then the generic equation~\eqref{eq:z5} is transformed by $\alpha$ to
      \begin{align*}
        \alpha^\star q_0 &= x_1^5 + x_2^5 + x_3^5 + x_4^5 +\\
        &\qquad + b_1 \xi^{i_2+3i_3+i_4} x_2 x_3^3 x_4 + b_2 \xi^{i_3+i_4} x_1^3 x_3 x_4 +\\
        &\qquad\qquad + b_3 \xi^{i_2+3i_4} x_1 x_2 x_4^3 + b_4 \xi^{3i_2+i_3} x_1 x_2^3 x_3 +\\
        &\qquad +c_1 \xi^{2i_2+i_3+2i_4} x_2^2 x_3 x_4^2 + c_2 \xi^{2i_3+2i_4} x_1 x_3^2 x_4^2 +\\
        &\qquad\qquad + c_3 \xi^{2i_2+i_4} x_1^2 x_2^2 x_4 + c_4 \xi^{i_2+2i_3} x_1^2 x_2 x_3^2\text{.}
      \end{align*}
      Since the terms $x_s^5$ are unchanged, the condition on $i_2$, $i_3$ and $i_4$ for $\alpha$ to fix $X$ is $\alpha^\star q_0 = q_0$, i.e.\ this system of equations in $\bZ_5$:
      \begin{equation*}
        \left\{
          \begin{array}{l}
            i_2+3i_3+i_4 \equiv 0,\\
            i_3+i_4 \equiv 0,\\
            i_2+3i_4 \equiv 0,\\
            3i_2+i_3 \equiv 0,
          \end{array}\quad
          \begin{array}{l}
            2i_2+i_3+2i_4 \equiv 0,\\
            2i_3+2i_4 \equiv 0,\\
            2i_2+i_4 \equiv 0,\\
            i_2+2i_3 \equiv 0\text{,}
          \end{array}
        \right. \iff
        \left\{
          \begin{array}{l}
            i_3 \equiv 2i_2,\\
            i_4 \equiv 3i_2\text{.}
          \end{array}
        \right.
      \end{equation*}
      Obviously this is so if all $b_s$ and $c_s$ are nonzero; if some of them are zero, then the associated equations are not in the system.
      
      If $\sigma$ is not the identity, there are also some swaps amongst the coefficients $b_s$ and $c_s$; for example if $\sigma$ corresponds to the multiplication by $4$, we have
      \begin{align*}
        \alpha^\star q_0 &= x_1^5 + x_2^5 + x_3^5 + x_4^5 +\\
        &\qquad + b_4 \xi^{3i_2+i_3} x_2 x_3^3 x_4 + b_3 \xi^{i_2+3i_4} x_1^3 x_3 x_4 +\\
        &\qquad\qquad + b_2 \xi^{i_3+i_4} x_1 x_2 x_4^3 + b_1 \xi^{i_2+3i_3+i_4} x_1 x_2^3 x_3 +\\
        &\qquad + c_4 \xi^{i_2+2i_3} x_2^2 x_3 x_4^2 + c_3 \xi^{2i_2+i_4} x_1 x_3^2 x_4^2 +\\
        &\qquad\qquad + c_2 \xi^{2i_3+2i_4} x_1^2 x_2^2 x_4 + c_1 \xi^{2i_2+i_3+2i_4} x_1^2 x_2 x_3^2\text{,}
      \end{align*}
      and for $\alpha$ to fix $X$ we need again $\alpha^\star q_0 = q_0$. But because of the nontrivial permutation, necessary conditions to have an automorphisms are that $b_1 / b_4$ is a fifth root of unity and the same for all the coefficients swapped. We define $n_{s,t}$ and $m_{s,t}$ in such a way that $b_s / b_t = \xi^{n_{s,t}}$ and $c_s / c_t = \xi^{m_{s,t}}$, assuming this is possible. This time the system of equations will give conditions not only on the entries of $\alpha$, but also on the coefficients of $q_0$:
      \begin{multline*}
        \left\{
          \begin{array}{l}
            n_{1,4} \equiv 3i_2+i_3,\\
            -n_{3,2} \equiv i_2+3i_4,\\
            n_{3,2} \equiv i_3+i_4,\\
            -n_{1,4} \equiv i_2+3i_3+i_4,
          \end{array}\quad
          \begin{array}{l}
            m_{1,4} \equiv i_2+2i_3,\\
            -m_{3,2} \equiv 2i_2+i_4,\\
            m_{3,2} \equiv 2i_3+2i_4,\\
            -m_{1,4} \equiv 2i_2+i_3+2i_4\text{,}
          \end{array}
        \right. \iff\\
        \iff \left\{
          \begin{array}{l}
            n_{3,2} \equiv 2n_{1,4},\\
            m_{1,4} \equiv 2n_{1,4},\\
            m_{3,2} \equiv 4n_{1,4}
          \end{array}
        \right. \quad \wedge \quad \left\{
          \begin{array}{l}
            i_3 \equiv 2i_2+n_{1,4},\\
            i_4 \equiv 3i_2+n_{1,4}
          \end{array}
        \right.
      \end{multline*}
      This means that even if all coefficients are nonzero, we will have automorphisms with this permutation only in the five four-dimensional subspaces of $\affmod_5$ defined by
      \begin{align*}
        b_1 &= b_4 \xi^{n_{1,4}}\text{,} & b_3 &= b_2 \xi^{2 n_{1,4}}\text{,}\\
        c_1 &= c_4 \xi^{2 n_{1,4}}\text{,} & c_3 &= c_2 \xi^{4 n_{1,4}}\text{.}
      \end{align*}
      
      Continuing with the last two permutations, doing all the computations, and combining all the data collected, we arrive at the complete description of the automorphisms group of Godeaux surfaces with torsion of order $5$. 
      
    \subsection{Description of the program}
      In this section we will describe the program we wrote to compute the automorphisms groups. It is available at~\cite{M2009}. It is written in \name{Python}, using the library \name{sympy} to handle symbolic computation. It also uses \name{GAP}, mainly to identify the groups we obtain at the end of the computation.
      
      Here are the main classes, with their methods.
      \begin{enumerate}
        \item The class \code{GAPInterface} connects the main program with \name{GAP}. Its public methods are:
          \begin{itemize}
            \item \code{NullSpaceMat}, which returns the kernel of a matrix given as input;
            \item \code{IdSmallGroup}, which returns the id of a group in the \name{GAP}'s small group list; the group is passed as a list of generators and a list of relations.
          \end{itemize}
          It uses internally \code{rewrite\_expr}, which translate an expression from \name{sympy} to \name{GAP}.
        \item The class \code{LinearModularParametricSystem} solves a linear system in the ring $\bZ_\nu$, $3 \leq \nu \leq 5$; it is parametric in the sense that some unknowns are treated as parameters, and, in the solution, the the value of a parameter cannot depend on a regular unknown. Its main methods are:
          \begin{itemize}
            \item \code{solve}, with the obvious meaning;
            \item \code{iter\_solutions}, which returns an iterator through all the possible values of the regular unknowns (eventually depending on the parameters);
            \item \code{gens\_sample\_solutions}, same as before, but substituting a sample values for the parameters (i.e.\ all zeroes) and returning only the generators of the solutions;
            \item \code{iter\_pars\_solutions}, which returns an iterator through all the possible values of the parameters.
          \end{itemize}
        \item The class \code{VectorSpace} implements complex vector subspace: it takes as input two lists, of generators and of linear equations.
        \item The class \code{GodeauxAutomorphismComputer} is where the actual computation is done. We will describe it in detail later.
      \end{enumerate}
      
      We have three functions which define the needed data for the three cases and call \code{GodeauxAutomorphismComputer}. The input data are the following (between parenthesis the data for the example $\nu = 5$):
      \begin{enumerate}
        \item \code{n} ($5$), the order of the torsion group;
        \item \code{monomials} ($x_1^5, \dots, x_1^2 x_2 x_3^2$), the monomials involved in the equations of $X$;
        \item \code{mod\_pars} ($b_1, \dots, b_4, c_1, \dots c_4$), the basis of $\affmod_\nu$;
        \item \code{cr\_gens} ($x_1, \dots, x_4$), the generator of $\Ho^0(X, \sK_X)$ or $\Ho^0(X, 2\sK_X)$; in the latter case, if the generators were $x_s$ in degree $1$ and $y_s$ in degree $2$, we put the $y_s$ and the products $x_s x_t$ denoted as $x_{s,t}$;
        \item \code{cr\_rels} (equation~\eqref{eq:z5}), the relations between elements of \code{cr\_gens}, depending on \code{mod\_pars}, excluding the trivial ones such as $x_{s,t} x_{u,v} = x_{s,u} x_{t,v}$;
        \item \code{cr\_rels\_multiplier} ($1$), the coefficients of the relations \code{cr\_rels}, after applying an automorphism; since in every equation there is a constant monomial, we know this coefficient;
        \item \code{sys\_unks} ($i_2, i_3, i_4$), the list of unknown exponents of $\xi$ in the definition of the general automorphism $\alpha$;
        \item \code{sys\_pars} ($n_{i,j}, m_{i,j}$), the list of possible parameters showing up in the computations;
        \item \code{sys\_pars\_coupling} ($(b_i, b_j) \mapsto n_{i,j}, \dots$), a dictionary that associate a parameter in \code{sys\_pars} to a ratio between two coefficients in \code{mod\_pars};
        \item \code{alpha} ($\diag(1, \xi^{i_2}, \xi^{i_3}, \xi^{i_4})$), the generic automorphism with $\sigma = 1$;
        \item \code{perms} ($I, P = P_{(2,4,3,1)}, P^2, P^3$), a dictionary that associate to a number in $\bZ_\nu^\star$ the permutation matrix; in particular we get the generic automorphism with permutation $h$ as \code{alpha * perms[h]};
        \item \code{rho} ($\diag(1, \xi, \xi^2, \xi^3)$), the matrix representing a generator for the action of $\bZ_\nu$ on $\bP$;
        \item \code{psi} ($\diag(\xi, \xi, \xi, \xi)$), generators for the group to quotient by to obtain $\bP\GL(n+1)$ from $\GL(n+1)$; this is needed since \name{GAP} does not understand projective matrices groups.
      \end{enumerate}
      
      The class \code{GodeauxAutomorphismComputer} splits the computation in three steps, each of which consisting in a private method.
      \begin{enumerate}
        \item The first method, \code{compute\_equations}, builds the dictionary \code{equations}, indexed by permutations and pairs of elements of \code{mod\_pars}, of modular equations that will compose the systems to be solved. For example, if $\nu = 5$, the entry corresponding to the permutation $4$ and parameters $(b_4, b_1)$ is the equation $-n_{1,4} \equiv i_2+3i_3+i_4$. The dictionary is built applying the generic automorphism \code{alpha * perms[h]} and comparing the coefficients of the elements of \code{monomials}.
        \item The second method, \code{compute\_solutions}, iterates through all possible vanishing of elements in \code{mod\_pars}, that is in ${\{0,1\}}^8$ or ${\{0,1\}}^9$; for every vanishing and every permutation, it takes the equations from \code{equation} and call \code{LinearModularParametricSystem} to solve it. After this, it computes the relations between the parameters needed to have solutions, that is, the vector subspace where the solutions live. It builds a dictionary, \code{automorphisms\_gens}, indexed by the various vector spaces and with values the set of matrices generating the automorphisms group found solving the system. The last thing it does is to propagate the set of automorphisms of a larger vector space $V$ to the set of vector space contained in $V$.
        \item The third method, \code{regroup\_solutions}, takes all these informations, spread in all the vector spaces and collects them together. Firstly, it computes \name{GAP}'s id for all the possible set of generators, and build a dictionary indexed by these ids and with values the list of vector spaces which have that group as automorphisms group. Then it remove from these lists irrelevant vector spaces, that is the ones that are contained in a different space with the same automorphisms group.
      \end{enumerate}
      
  \section{The results}\label{sec:results}
    \subsection{Torsion of order five}
      The results given by the program are listed in Table~\ref{tab:z5} (where $s, t \in \bZ_5$ and $u, v \in \bZ_5^\star$).
      
      \begin{table}[h]
        \begin{tabular}{lccllcc}
          \toprule
          Group & \name{GAP} id & V. sp. & \multicolumn{2}{c}{Equations} & Dim. & Comp.\\
          \midrule[\heavyrulewidth]
          $\{1\}$ & $(1, 1)$ & $\affmod_5$ & & & $8$ & $1$\\
          \midrule
          \multirow{2}{*}{$\bZ_2$} & \multirow{2}{*}{$(2, 1)$} & \multirow{2}{*}{$Q_s$} & $b_1 = b_4 \xi^s$ & $c_1 = c_4 \xi^{2s}$ & \multirow{2}{*}{$4$} & \multirow{2}{*}{$5$}\\
          & & & $b_3 = b_2 \xi^{2s}$ & $c_3 = c_2 \xi^{4s}$ & &\\
          \midrule
          \multirow{3}{*}{$\bZ_4$} & \multirow{3}{*}{$(4, 1)$} & \multirow{3}{*}{$P_{s,t}$} & $b_1 = b_2 \xi^{s+t}$ & $c_1 = c_2 \xi^{2s+2t}$ & \multirow{3}{*}{$2$} & \multirow{3}{*}{$25$}\\
          & & & $b_2 = b_4 \xi^{s}$ & $c_2 = c_4 \xi^{2s}$ & &\\
          & & & $b_3 = b_1 \xi^{3s+t}$ & $c_3 = c_1 \xi^{s+2t}$ & &\\
          \midrule
          $\bZ_5$ & $(5, 1)$ & $H_u$ & \multicolumn{2}{l}{$b_v = c_v = 0$, $\forall v \neq u$} & $2$ & $4$\\
          \midrule
          $\bZ_5^2 \ltimes \bZ_4$ & $(100, 10)$ & $O$ & \multicolumn{2}{l}{$b_v = c_v = 0$, $\forall v$} & $0$ & $1$\\
          \bottomrule
        \end{tabular}
        \caption{Special subcomponents in the case $\nu = 5$.}\label{tab:z5}
      \end{table}

      We also have the relations of containment amongst the various vector spaces, recorded in Figure~\ref{fig:z5} (a vertical path means that the space at the lower end is contained in the one at the upper end).

      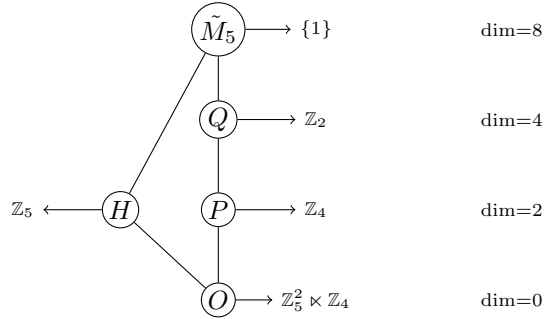
\begin{figure}[h]
        \begin{tikzpicture}
        \def\x{1.3}
        \def\y{-1.2}
        \node[shape=circle,draw,inner sep=0.03cm] (A1_3) at (1*\x, 3*\y) {$O$};
        \node[shape=circle,draw,inner sep=0.03cm] (A0_2) at (0*\x, 2*\y) {$H$};
        \node[shape=circle,draw,inner sep=0.03cm] (A1_2) at (1*\x, 2*\y) {$P$};
        \node[shape=circle,draw,inner sep=0.03cm] (A1_1) at (1*\x, 1*\y) {$Q$};
        \node[shape=circle,draw,inner sep=0.03cm] (A1_0) at (1*\x, 0*\y) {$\affmod_5$};
        \node (M) at (4*\x, 0*\y) {$\scriptstyle{\dim = 8}$};
        \node (R1) at (4*\x, 1*\y) {$\scriptstyle{\dim = 4}$};
        \node (A4_2) at (4*\x, 2*\y) {$\scriptstyle{\dim = 2}$};
        \node (A4_3) at (4*\x, 3*\y) {$\scriptstyle{\dim = 0}$};
        \path (A1_0) edge node [auto] {$\scriptstyle{}$} (A1_1);
        \path (A1_1) edge node [auto] {$\scriptstyle{}$} (A1_2);
        \path (A1_3) edge node [auto] {$\scriptstyle{}$} (A0_2);
        \path (A1_2) edge node [auto] {$\scriptstyle{}$} (A1_3);
        \path (A0_2) edge node [auto] {$\scriptstyle{}$} (A1_0);
        \node (B1_3) at (2*\x, 3*\y) {$\scriptstyle{\bZ_5^2 \ltimes \bZ_4}$};
        \path (A1_3) edge [->] (B1_3);
        \node (B0_2) at (-1*\x, 2*\y) {$\scriptstyle{\bZ_5}$};
        \path (A0_2) edge [->] (B0_2);
        \node (B1_2) at (2*\x, 2*\y) {$\scriptstyle{\bZ_4}$};
        \path (A1_2) edge [->] (B1_2);
        \node (B1_1) at (2*\x, 1*\y) {$\scriptstyle{\bZ_2}$};
        \path (A1_1) edge [->] (B1_1);
        \node (B1_0) at (2*\x, 0*\y) {$\scriptstyle{\{1\}}$};
        \path (A1_0) edge [->] (B1_0);
        \end{tikzpicture}
        \caption{Hasse diagram for $\nu = 5$.}\label{fig:z5}
      \end{figure}

      \begin{remark}\label{remark:origin is godeaux}
        We worked in $\affmod_5$; it may happen that some of them lie in the locus of $\affmod_5$ we have to wipe out because of bad singularities. This is not the case for $\nu = 5$: we know that the origin $O$ represents a Godeaux surface (actually, the one Godeaux himself constructed). Hence, the space of Godeaux surfaces is a nonempty open set in every subspace we consider, since each one passes through the origin.
      \end{remark}

      It is easy to see that the high number of components in the three middle cases are due to the fact that up to now we are considering $\affmod_5$ and not $\schmod_5$ itself. Indeed, passing to the quotient, all the components collapse in $\schmod_5$ to one irreducible component for each group.
      
    \subsection{Torsion of order four}
      The results for the case $\nu = 4$ are given in Table~\ref{tab:z4} (where $s, v \in \bZ_4^\star$). In this case, the origin does not represent anymore a Godeaux surface. Indeed, in the origin we have $q_2 = y_1^2 + y_3^2$ which is reducible. Therefore, the argument of Remark~\ref{remark:origin is godeaux} does not apply. We will show later that the three vector spaces with a bullet on the right are exactly the ones not containing Godeaux surfaces.  
      
      \begin{table}[h]
        \begin{tabular}{lccllccc}
          \toprule
          Group & \name{GAP} id & V. sp. & \multicolumn{2}{c}{Equations} & Dim. & Comp.\\
          \midrule[\heavyrulewidth]
          $\{1\}$ & $(1, 1)$ & $\affmod_4$ & & & $8$ & $1$\\
          \midrule
          \multirow{5}{*}{$\bZ_2$} & \multirow{5}{*}{$(2, 1)$} & $R_1$ & \multicolumn{2}{l}{$b_v = 0$, $\forall v$} & $6$ & $1$\\
          & & \multirow{2}{*}{$W_s$} & \multicolumn{2}{l}{$b_1 = b_3 \xi^s$}  & \multirow{2}{*}{$5$} & \multirow{2}{*}{$2$}\\
          & & & $c_1 = c_3 \xi^{2s}$ & $d_1 = d_3 \xi^{2s}$\\
          & & $R_2$ & \multicolumn{2}{l}{$a^\prime = b_3 = c_v = 0$, $\forall v$} & $4$ & $1$\\
          & & $R_3$ & \multicolumn{2}{l}{$a^\prime = b_1 = c_v = 0$, $\forall v$} & $4$ & $1$\\
          \midrule
          \multirow{5}{*}{$\bZ_2^2$} & \multirow{5}{*}{$(4, 2)$} & \multirow{2}{*}{$R_{4, s}$} & \multicolumn{2}{l}{$b_v = 0$, $\forall v$} & \multirow{2}{*}{$4$} & \multirow{2}{*}{$2$}\\
          & & & $c_1 = c_3 \xi^s$ & $d_1 = d_3 \xi^s$\\
          & & \multirow{2}{*}{$R_{5, s}$} & \multicolumn{2}{l}{$a^\prime = b_1 = b_3 = 0$} & \multirow{2}{*}{$3$} & \multirow{2}{*}{$2$}\\
          & & & $c_1 = c_3 \xi^s$ & $d_1 = d_3 \xi^{s+2}$\\
          & & $S_5$ & \multicolumn{2}{l}{$a^\prime = b_v = c_v = 0$, $\forall v$} & $3$ & $1$\\
          \midrule
          \multirow{3}{*}{$\bZ_4$} & \multirow{3}{*}{$(4, 1)$} & $S_4$ & \multicolumn{2}{l}{$a^\prime = b_v = d_v = 0$, $\forall v$} & $3$ & $1$\\
          & & $S_2$ & \multicolumn{2}{l}{$a = a^\prime = b_3 = c_v = d_3 = 0$, $\forall v$} & $2$ & $1$\\
          & & $S_3$ & \multicolumn{2}{l}{$a = a^\prime = b_1 = c_v = d_1 = 0$, $\forall v$} & $2$ & $1$\\
          \midrule
          \multirow{6}{*}{$\bZ_4 \times \bZ_2$} & \multirow{6}{*}{$(8, 2)$} & \multirow{2}{*}{$S_{6, s}$} & \multicolumn{2}{l}{$a^\prime = b_v = d_v = 0$, $\forall v$} & \multirow{2}{*}{$2$} & \multirow{2}{*}{$2$}\\
          & & & \multicolumn{2}{l}{$c_1 = c_3 \xi^s$}\\
          & & \multirow{2}{*}{$T_{4, s}$} & \multicolumn{2}{l}{$a = a^\prime = b_v = d_v = 0$, $\forall v$} & \multirow{2}{*}{$1$} & \multirow{2}{*}{$2$}\\
          & & & \multicolumn{2}{l}{$c_1 = c_3 \xi^{s+1}$}\\
          & & $T_2$ & \multicolumn{2}{l}{$a = a^\prime = b_v = c_v = d_3 = 0$, $\forall v$} & $1$ & $1$\\
          & & $T_3$ & \multicolumn{2}{l}{$a = a^\prime = b_v = c_v = d_1 = 0$, $\forall v$} & $1$ & $1$\\
          \midrule
          \multirow{3}{*}{$D_8$} & \multirow{3}{*}{$(8, 3)$} & $S_1$ & \multicolumn{2}{l}{$b_v = c_v = d_v = 0$, $\forall v$} & $2$ & $1$ & $\bullet$\\
          & & \multirow{2}{*}{$S_{7, s}$} & \multicolumn{2}{l}{$a^\prime = b_v = c_v = 0$, $\forall v$} & \multirow{2}{*}{$2$} & \multirow{2}{*}{$2$}\\
          & & & \multicolumn{2}{l}{$d_1 = d_3 \xi^s$}\\
          \midrule
          $(\bZ_4 \times \bZ_2) \ltimes \bZ_2$ & $(16, 13)$ & $T_1$ & \multicolumn{2}{l}{$a^\prime = b_v = c_v = d_v = 0$, $\forall v$} & $1$ & $1$ & $\bullet$\\
          \midrule
          $\bZ_4^2 \ltimes \bZ_2$ & $(32, 11)$ & $O$ & \multicolumn{2}{l}{$a = a^\prime = b_v = c_v = d_v = 0$, $\forall v$} & $0$ & $1$ & $\bullet$\\
          \bottomrule
        \end{tabular}
        \caption{Special subcomponents in the case $\nu = 4$.}\label{tab:z4}
      \end{table}

      Again, when a space has several components, they collapse to one in $\schmod_4$; moreover, we can easily check that $R_2$ and $R_3$ collapse into one irreducible component inside $\schmod_4$; the same is true for $S_2$ and $S_3$, and $T_2$ and $T_3$. We define $R_{2,3} = R_2 \cup R_3$, $S_{2,3} = S_2 \cup S_3$ and $T_{2,3} = T_2 \cup T_3$.
      
      As we did before, we represent all the vector spaces into Figure~\ref{fig:z4}, ordered by containment. We recall that a vertical path means containment, but here we have also vertical dashed segments: for example, the one connecting $T_{4,s}$ with $S_{6,s}$ means that the former is not contained in the latter, but it is so when seen in the quotient $\schmod_4$. Also, vector spaces with dashed circle are the same as marked vector spaces in the table (that is, they do not contain any point representing Godeaux surfaces). To prove that they are exactly the spaces not containing Godeaux surfaces, we ``climb'' the diagram starting from the origin $O$, and for each space we check if it contains some (equivalently, an open subset of) Godeaux surfaces.

      \begin{figure}[h]
        \begin{tikzpicture}
        \def\x{1.3}
        \def\y{-1.2}
        \node[shape=circle,draw,dashed,inner sep=0.03cm] (O) at (4*\x, 7*\y) {$O$};
        \node[shape=circle,draw,dashed,inner sep=0.03cm] (T1) at (2*\x, 6*\y) {$T_1$};
        \node[shape=circle,draw,dashed,inner sep=0.03cm] (S1) at (1*\x, 5*\y) {$S_1$};
        \node[shape=circle,draw,inner sep=0.03cm] (T4) at (4*\x, 6*\y) {$T_{4,i}$};
        \node[shape=circle,draw,inner sep=0.03cm] (T2) at (6*\x, 6*\y) {$T_{2,3}$};
        \node[shape=circle,draw,inner sep=0.03cm] (S6) at (4*\x, 5*\y) {$S_{6,i}$};
        \node[shape=circle,draw,inner sep=0.03cm] (S7) at (3*\x, 5*\y) {$S_{7,i}$};
        \node[shape=circle,draw,inner sep=0.03cm] (S2) at (6*\x, 5*\y) {$S_{2,3}$};
        \node[shape=circle,draw,inner sep=0.03cm] (R5) at (3*\x, 4*\y) {$R_{5,i}$};
        \node[shape=circle,draw,inner sep=0.03cm] (S4) at (4*\x, 4*\y) {$S_4$};
        \node[shape=circle,draw,inner sep=0.03cm] (S5) at (5*\x, 4*\y) {$S_5$};
        \node[shape=circle,draw,inner sep=0.03cm] (R4) at (3*\x, 3*\y) {$R_{4,i}$};
        \node[shape=circle,draw,inner sep=0.03cm] (R2) at (6*\x, 3*\y) {$R_{2,3}$};
        \node[shape=circle,draw,inner sep=0.03cm] (W) at (3*\x, 2*\y) {$W_i$};
        \node[shape=circle,draw,inner sep=0.03cm] (R1) at (4*\x, 1*\y) {$R_1$};
        \node[shape=circle,draw,inner sep=0.03cm] (M) at (4*\x, 0*\y) {$\affmod_4$};
        \node (A9_0) at (8*\x, 0*\y) {$\scriptstyle{\dim = 8}$};
        \node (A9_1) at (8*\x, 1*\y) {$\scriptstyle{\dim = 6}$};
        \node (A9_2) at (8*\x, 2*\y) {$\scriptstyle{\dim = 5}$};
        \node (A9_3) at (8*\x, 3*\y) {$\scriptstyle{\dim = 4}$};
        \node (A9_4) at (8*\x, 4*\y) {$\scriptstyle{\dim = 3}$};
        \node (A9_5) at (8*\x, 5*\y) {$\scriptstyle{\dim = 2}$};
        \node (A9_6) at (8*\x, 6*\y) {$\scriptstyle{\dim = 1}$};
        \node (A9_7) at (8*\x, 7*\y) {$\scriptstyle{\dim = 0}$};
        \path (O) edge node [auto] {$\scriptstyle{}$} (T1);
        \path (O) edge node [auto] {$\scriptstyle{}$} (T2);
        \path (O) edge node [auto] {$\scriptstyle{}$} (T4);
        \path (S7) edge node [auto] {$\scriptstyle{}$} (S5);
        \path (T1) edge node [auto] {$\scriptstyle{}$} (S1);
        \path (S1) edge [bend left=40] node [auto] {$\scriptstyle{}$} (R1);
        \path (T1) edge node [auto] {$\scriptstyle{}$} (S6);
        \path (T1) edge node [auto] {$\scriptstyle{}$} (S7);
        \path (T4) edge [dashed] node [auto] {$\scriptstyle{}$} (S6);
        \path (S5) edge node [auto] {$\scriptstyle{}$} (R1);
        \path (R4) edge node [auto] {$\scriptstyle{}$} (R1);
        \path (S5) edge node [auto] {$\scriptstyle{}$} (R2);
        \path (S6) edge node [auto] {$\scriptstyle{}$} (R5);
        \path (T2) edge node [auto] {$\scriptstyle{}$} (S5);
        \path (S7) edge node [auto] {$\scriptstyle{}$} (R5);
        \path (T2) edge node [auto] {$\scriptstyle{}$} (S2);
        \path (S2) edge node [auto] {$\scriptstyle{}$} (R2);
        \path (R4) edge node [auto] {$\scriptstyle{}$} (W);
        \path (S6) edge node [auto] {$\scriptstyle{}$} (S4);
        \path (R5) edge [dashed] node [auto] {$\scriptstyle{}$} (R4);
        \path (S4) edge node [auto] {$\scriptstyle{}$} (R1);
        \path (R1) edge node [auto] {$\scriptstyle{}$} (M);
        \path (W) edge node [auto] {$\scriptstyle{}$} (M);
        \path (R2) edge [bend right=10] node [auto] {$\scriptstyle{}$} (M);

        \node (Z442) at (3*\x, 7.2*\y) {$\scriptstyle{\bZ_4^2 \ltimes \bZ_2}$};
        \path (O) edge [->] (Z442);
        \node (Z422) at (1.4*\x, 6.8*\y) {$\scriptstyle{(\bZ_4 \times \bZ_2) \ltimes \bZ_2}$};
        \path (T1) edge [->] (Z422);
        \node (Z42a) at (6.5*\x, 6.7*\y) {$\scriptstyle{\bZ_4 \times \bZ_2}$};
        \path (T2) edge [->] (Z42a);
        \node (Z42b) at (4.8*\x, 5.5*\y) {$\scriptstyle{\bZ_4 \times \bZ_2}$};
        \path (S6) edge [->] (Z42b);
        \path (T4) edge [->] (Z42b);
        \node (Z22a) at (2.4*\x, 3.5*\y) {$\scriptstyle{\bZ_2^2}$};
        \path (R5) edge [->] (Z22a);
        \path (R4) edge [->] (Z22a);
        \node (Z22c) at (4.8*\x, 4.7*\y) {$\scriptstyle{\bZ_2^2}$};
        \path (S5) edge [->] (Z22c);
        \node (D8) at (2*\x, 4.5*\y) {$\scriptstyle{D_8}$};
        \path (S7) edge [->] (D8);
        \path (S1) edge [->] (D8);
        \node (Z4) at (6.5*\x, 4.2*\y) {$\scriptstyle{\bZ_4}$};
        \path (S2) edge [->] (Z4);
        \node (Z4b) at (4.4*\x, 3.5*\y) {$\scriptstyle{\bZ_4}$};
        \path (S4) edge [->] (Z4b);
        \node (Z2a) at (6.2*\x, 2.4*\y) {$\scriptstyle{\bZ_2}$};
        \path (R2) edge [->] (Z2a);
        \node (Z2b) at (3.5*\x, 1.5*\y) {$\scriptstyle{\bZ_2}$};
        \path (R1) edge [->] (Z2b);
        \path (W) edge [->] (Z2b);
        \node (Z1) at (5*\x, 0.2*\y) {$\scriptstyle{\{1\}}$};
        \path (M) edge [->] (Z1);
        \end{tikzpicture}
        \caption{Hasse diagram for $\nu = 4$.}\label{fig:z4}
      \end{figure}
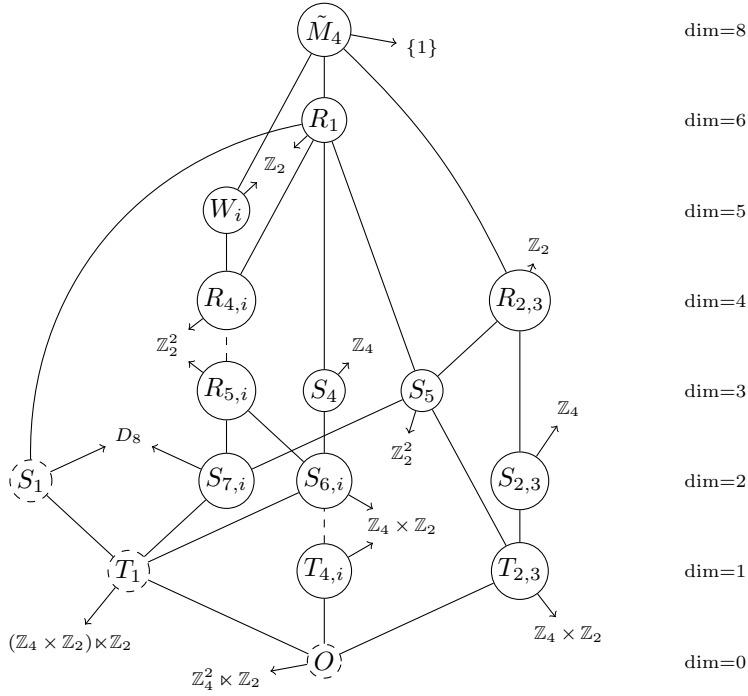

      We already seen that $O$ cannot corresponds to a Godeaux surface, since $q_2 = y_1^2 + y_3^2$ is reducible. For the same reason, $T_1$ and $S_1$ do not contain points corresponding to Godeaux surfaces.
      
      In $T_2$ we have these equations:
      \begin{align*}
        q_0 &= x_1^4 + x_2^4 + x_3^4 + a x_1^2 x_3^2 + y_1 y_3\text{,}\\
        q_2 &= d_1 x_1^2 x_2^2 + y_1^2 + y_3^2\text{;}
      \end{align*}
      dehomogenizing by $x_i$, $i \in \{1,2,3\}$, we obtain an affine covering of $X$, so we can compute the singularities via the Jacobian matrix. For example, in the open set $(x_1 \neq 0) \cong \mathbb{A}^4$ we have \[ J = \begin{pmatrix} 4 x_2^3 & 4 x_3^3 & y_3 & y_1\\ 2 d_1 x_2 & 0 & 2 y_1 & 2 y_3 \end{pmatrix}\text{;} \]
      thanks to the last minor involving only the $y_i$, we have two cases in which $J$ has rank strictly less than $2$.
      \begin{enumerate}
        \item If $y_1 = y_3 = 0$, the other minors, involving only the $x_i$, have to be $0$, so we have the following equations: \[0 = 8 d_1 x_2 x_3^3 = 1 + x_2^4 + x_3^4 = d_1 x_2^2\text{.} \] Since we are interested in an open subset of $T_2$, we may assume $d_1 \neq 0$ so we get four singular points $x_2 = 0$, $x_3 = \xi^j \sqrt[4]{-1}$.
        \item If $y_3 = \pm y_1 \neq 0$, then the second row is plus or minus two times the first row; in particular we have \[ 0 = x_3 = 8 x_2^3 \mp 2 d_1 x_2 = 1 + x_2^4 \pm y_1^2 = d_1 x_2^2 + 2 y_1^2\text{.} \] It cannot happen that $x_2 = 0$, so $x_2^2 = d_1 / 4$ and $y_1^2 = - d_1^2 / 8$; but this implies $d_1^2 = 16$ and we can discard this particular situation that happens only in a proper closed subset of $T_2$.
      \end{enumerate}
      In the same way we can find singular points in the other two affine open subsets $(x_2 \neq 0)$ and $(x_3 \neq 0)$, and the result is that we have $8$ singular points for the surface $X$ represented by a generic point of $T_2$: \begin{align*} &[1, 0, \xi^j \sqrt[4]{-1}, 0, 0]\text{,} & &[0, 1, \xi^j \sqrt[4]{-1}, 0, 0]\text{.}\end{align*} Now we have to check if these are rational double points or worse. For example, consider the point $p = [1, 0, \xi^j \sqrt[4]{-1}, 0, 0]$ in the affine open set relative to $x_1$, we have ${(\partial q_0 / \partial x_3)}|_p \neq 0$, hence we can represent, analitically locally, $x_3$ as $x_3(p) + g(x_2, y_1, y_3)$. Substituting $x_3$ in $q_2$, we obtain the expression \[ q_2 = y_1^2 + y_3^2 + x_3(p) d_1 x_2^2 + \cdots \] where $x_3(p) \neq 0$ and the other terms are of order at least three in $p$. So the singularity is of type $A_1$, in particular it is a rational double point. The situation is the same for every other singular point (since they're in the same $G$-orbit), so we conclude that in $T_2$ there is a nonempty open set of Godeaux surfaces.
      
      The situation in $T_3$ is completely specular. We do not write the similar computation for $T_{4,i}$ and $S_{7,i}$ anyway both contain an open subset of Godeaux surfaces.
    
    \subsection{Torsion of order three}    
      The results given from the program in the case $\nu = 3$ are simpler then the others. This is understandable: going from $\bZ_5$ to $\bZ_4$ we've seen an increasing complexity on the vector spaces, but a decreasing order of the automorphisms groups. In this last case, the latter behaviour prevails on the former.

      \begin{table}[h]
        \begin{tabular}{lccllccc}
          \toprule
          Group & \name{GAP} id & V. sp. & \multicolumn{2}{c}{Equations} & Dim. & Comp.\\
          \midrule[\heavyrulewidth]
          $\{1\}$ & $(1, 1)$ & $\affmod_3$ & & & $9$ & $1$\\
          \midrule
          $\bZ_2$ & $(2, 1)$ & $A$ & \multicolumn{2}{l}{$a_1 = a_2 = b_1 = b_2 = 0$} & $5$ & $1$ & $\bullet$\\
          \bottomrule
        \end{tabular}
        \caption{Special subcomponents in the case $\nu = 3$.}\label{tab:z3}
      \end{table}     
      
      Indeed, the results listed in Table~\ref{tab:z3} are just two lines, the second of them describing a vector space not containing any Godeaux surface (we already use that for a point to describe a Godeaux surface, it must be $a_1 + a_2 \neq 0$); i.e.\ Godeaux surfaces with torsion of order three have no nontrivial automorphisms.

  \section{Moduli stacks}\label{sec:stacks}
    In this section we define the moduli stack $\trumod_\nu$ of Godeaux surfaces with torsion of order $\nu$ and relate it to the computation of automorphisms of the previous section. More precisely, let $G \cong \bZ_\nu$ be the torsion group of a Godeaux surface $S$ realized as a subgroup of $\Aut(\bP)$ as in~\ref{subsection:five}, $H_\nu = \N_{\Aut(\bP)}(G)$ as in Remark~\ref{remark:finite_group}, and denote with $\stamod_\nu$ the quotient stack $[\affmod_\nu / (H_\nu/G)]$. We will show that there is a natural map $\Phi\colon \stamod_\nu \to \trumod_\nu$ and that it is an equivalence on points. Moreover, we will show that this map is an isomorphism in the case $\nu = 5$; there are no reasons to doubt that this holds also for the other torsions. Nevertheless we would need finer arguments, since the description of the canonical model of a surface with lower torsion is not as nice as in the case $\nu = 5$.
    
    \begin{remark}
      There are two natural definitions for the moduli stack of surfaces: the first considers flat projective families where the fibers are smooth minimal models of some surface in the class; the second considers canonical models instead of minimal models. Often the latter seems more natural than the former and here we will pursue this approach. Recall that for a Godeaux surface $S$, we denoted with $X \to S$ the smooth cover coming from $\Tors(S)$, and with $\obar{X}$ and $\obar{S}$ the canonical models of $X$ and $S$.
    \end{remark}

    \begin{definition}
      The \emph{moduli stack of Godeaux surfaces\/} with torsion of order $\nu$ is the stack $\trumod_\nu$ defined as a category fibered in groupoids by:
      \begin{align*}
        \Obj(\trumod_\nu) &= \left\{ \pi\colon \obar{S}_B \to B \,\middle|\, \begin{array}{@{}l@{}} \text{$\pi$ flat, projective,}\\ \text{$\forall b \in B$, $\obar{S}_b$ is the canonical}\\ \text{model of a Godeaux surface}\\ \text{with torsion of order $\nu$} \end{array} \right\}\text{, and}\\
        \Mor_{\trumod_{\nu}}(\pi, \pi^\prime) &= \left\{ (\phi, \psi) \,\middle|\,
          \begin{tikzpicture}[baseline]
            \def\x{1.5}
            \def\y{-1.2}
            \node (A0_0) at (0*\x, -.5*\y) {$\obar{S}_B$};
            \node (A0_1) at (1*\x, -.5*\y) {$\obar{S}^\prime_{B^\prime}$};
            \node (A1_0) at (0*\x, .5*\y) {$B$};
            \node (A1_1) at (1*\x, .5*\y) {$B^\prime$};
            \node (B1_1) at (.5*\x, 0*\y) {$\square$};
            \path (A0_0) edge [->] node [auto,swap] {$\scriptstyle{\psi}$} (A0_1);
            \path (A1_0) edge [->] node [auto] {$\scriptstyle{\phi}$} (A1_1);
            \path (A0_1) edge [->] node [auto,swap] {$\scriptstyle{\pi^\prime}$} (A1_1);
            \path (A0_0) edge [->] node [auto] {$\scriptstyle{\pi}$} (A1_0);
          \end{tikzpicture}
        \right\}\text{;}
      \end{align*}
      the projection to schemes sends $\pi\colon \obar{S}_B \to B$ to $B$ and $(\phi, \psi)$ to $\phi$.
    \end{definition}

    \begin{proposition}
      There exists a natural morphism of stacks $\Phi\colon \stamod_\nu \to \trumod_\nu$.
    \end{proposition}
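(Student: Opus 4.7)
The plan is to build $\Phi$ by constructing an explicit universal family on $\affmod_\nu$ and then invoking the universal property of the quotient stack. First, I would package the equations from Section~\ref{sec:godeaux} into a single subscheme $\obar{X}_{\affmod_\nu} \subset \bP \times \affmod_\nu$ whose defining equations have coefficients equal to the coordinate functions of $\affmod_\nu$. By construction, the projection $\obar{X}_{\affmod_\nu} \to \affmod_\nu$ is flat and projective, and its fiber over a closed point $p$ is exactly the canonical model $\obar{X}_p$ of the universal cover of the Godeaux surface parametrized by $p$. Because the eigenspace decomposition was used to restrict the shape of the equations, the fixed action $\rho$ of $G$ on $\bP$ lifts to a $G$-action on the total space $\obar{X}_{\affmod_\nu}$ that is trivial on $\affmod_\nu$ and preserves each fiber. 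Taking the relative quotient $\obar{S}_{\affmod_\nu} := \obar{X}_{\affmod_\nu}/G \to \affmod_\nu$ then produces a flat projective family whose fibers are the canonical models of the Godeaux surfaces in question, and hence a morphism $f\colon \affmod_\nu \to \trumod_\nu$.

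Next I would promote $f$ to an $(H_\nu/G)$-equivariant morphism. The key observation is that the whole group $H_\nu = \N_{\Aut(\bP)}(G)$ acts on the pair $(\bP, \affmod_\nu)$ in a compatible way: for each $h \in H_\nu$, conjugation by $h$ permutes the monomial basis used to write the equations of $\obar{X}$, which is precisely how $H_\nu$ acts on the affine space $\affmod_\nu$ of coefficients. Consequently the diagonal automorphism $(x, p) \mapsto (h \cdot x, h \cdot p)$ of $\bP \times \affmod_\nu$ sends $\obar{X}_{\affmod_\nu}$ to itself, and because $h$ normalizes $G$ this descends to an automorphism of $\obar{S}_{\affmod_\nu}$ lifting the $H_\nu$-action on the base. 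The subgroup $G \subset H_\nu$ acts trivially on $\affmod_\nu$ (Remark~\ref{remark:finite_group}) and acts by deck transformations on $\obar{X}_{\affmod_\nu}$ that become the identity after quotienting, so the induced action on $\obar{S}_{\affmod_\nu}$ factors through $H_\nu/G$. This packages $f$ as an $(H_\nu/G)$-equivariant object of $\trumod_\nu$ over $\affmod_\nu$.

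Finally, I would invoke the universal property of the quotient stack $\stamod_\nu = [\affmod_\nu/(H_\nu/G)]$: a morphism to any stack is the same datum as an equivariant object on the atlas. Applied to the equivariant family just constructed, this produces the desired morphism $\Phi\colon \stamod_\nu \to \trumod_\nu$. Concretely, on a scheme $B$, an object of $\stamod_\nu(B)$ is an $(H_\nu/G)$-torsor $P \to B$ equipped with an equivariant map $P \to \affmod_\nu$; pulling back $\obar{S}_{\affmod_\nu}$ along this map and descending by the torsor structure yields a flat family of canonical Godeaux models over $B$, and morphisms of torsors plus $2$-isomorphisms translate into morphisms in $\trumod_\nu$ in the obvious way.

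The main obstacle I anticipate is the routine but notationally heavy verification that the equivariance data is 2-coherent (satisfies the cocycle condition) and that the descent of the family along an arbitrary torsor produces a genuinely flat projective family of canonical models, rather than just a fiberwise family. Flatness and the canonical-model property are preserved by étale descent, so the issue is really bookkeeping; nothing beyond the explicit compatibility between the action of $H_\nu$ on $\bP$ and its action on $\affmod_\nu$ is needed. The construction of the morphism is thus, in the end, entirely dictated by the explicit presentations of Section~\ref{sec:godeaux}.
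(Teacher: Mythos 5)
Your proposal is correct and follows essentially the same route as the paper: both construct the universal family over $\affmod_\nu$ (the paper takes $U \subseteq \affmod_\nu \times (\bP/G)$ directly, you build it as the $G$-quotient of a family in $\bP \times \affmod_\nu$), verify $H_\nu$-equivariance via the compatible action of $H_\nu$ on $\bP$ and on the coefficient space, and descend to the quotient stack. The only cosmetic difference is that the paper spells out the $2$-morphism $\eta(f) = (\id_T, \psi)$ for a test scheme $T$ rather than quoting the universal property of $[\affmod_\nu/(H_\nu/G)]$ wholesale.
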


    \begin{proof}
      Let $\tilde{\Phi} \colon \affmod_\nu \to \trumod_\nu$ be the morphism determined by the universal family $U \to \affmod_\nu$, with $U \subseteq \affmod_\nu \times (\bP / G)$. We will prove that $\tilde{\Phi}$ is $H_\nu$-equivariant, and so it passes to the quotient (recall that $G \subset H_\nu$ acts trivially on $\affmod_\nu$).
      
       Being $H_\nu$-equivariant means that for every $h \in H_\nu$ we have a canonical $2$-morphism $\eta$ making the following diagram $2$-commutative:
        \[
        \begin{tikzpicture}
          \def\x{1.2}
          \def\y{-1.0}
          \node (A0_0) at (150:\x) {$\affmod_\nu$};
          \node (A0_2) at (30:\x) {$\affmod_\nu$};
          \node (A1_1) at (270:1) {$\trumod_\nu$.};
          \path (A0_0) edge [->] node [auto,swap] {$\scriptstyle{\tilde{\Phi}}$} (A1_1);
          \path (A0_2) edge [->] node [auto] {$\scriptstyle{\tilde{\Phi}}$} (A1_1);
          \path (A0_0) edge [->] node [auto] {$\scriptstyle{h}$} (A0_2);
          \path (210:\x/2) -- (A0_2)
            node [pos=0.5,allow upside down,sloped] {$\Longrightarrow$}
            node [pos=0.4,auto,swap] {$\scriptstyle{\eta}$};
        \end{tikzpicture}
        \]
        Given a map $f\colon T \to \affmod_\nu$, we have that $\tilde{\Phi}(f)$ is the family $\obar{S}_T \to T$ in the cartesian diagram
        \[
        \begin{tikzpicture}
          \def\x{1.5}
          \def\y{-1.2}
          \node (A0_0) at (0*\x, 0*\y) {$\obar{S}_T$};
          \node (A0_1) at (1*\x, 0*\y) {$U$};
          \node (A1_0) at (0*\x, 1*\y) {$T$};
          \node (A1_1) at (1*\x, 1*\y) {$\affmod_\nu$};
          \node (A2_2) at (0.5*\x, 0.5*\y) {$\square$};
          \path (A0_0) edge [->] node [auto] {$\scriptstyle{}$} (A0_1);
          \path (A1_0) edge [->] node [auto,swap] {$\scriptstyle{f}$} (A1_1);
          \path (A0_1) edge [->] node [auto] {$\scriptstyle{u}$} (A1_1);
          \path (A0_0) edge [->] node [auto,swap] {$\scriptstyle{\tilde{\Phi}(f)}$} (A1_0);
        \end{tikzpicture}
        \]
        and in the same way $\tilde{\Phi} \circ h(f)$ is the family $\obar{S}_T^\prime \to T$. We have to define $\eta(f)\colon \tilde{\Phi}(f) \Rightarrow \tilde{\Phi} \circ h(f)$ as a couple of morphisms $(g, \bar{g})$ making the following diagram cartesian:
        \[
        \begin{tikzpicture}
          \def\x{1.5}
          \def\y{-1.2}
          \node (A0_0) at (0*\x, 0*\y) {$\obar{S}_T$};
          \node (A0_1) at (1*\x, 0*\y) {$\obar{S}_T^\prime$};
          \node (A1_0) at (0*\x, 1*\y) {$T$};
          \node (A1_1) at (1*\x, 1*\y) {$T$};
          \node (A2_2) at (0.5*\x, 0.5*\y) {$$};
          \path (A0_0) edge [->] node [auto] {$\scriptstyle{\bar{g}}$} (A0_1);
          \path (A1_0) edge [->] node [auto,swap] {$\scriptstyle{g}$} (A1_1);
          \path (A0_1) edge [->] node [auto] {$\mathrlap{\scriptstyle{\tilde{\Phi}(h(f))}}$} (A1_1);
          \path (A0_0) edge [->] node [auto,swap] {$\scriptstyle{\tilde{\Phi}(f)}$} (A1_0);
        \end{tikzpicture}
        \]
        Since \[ \obar{S}_T = T \times_{\affmod_\nu} U \subseteq T \times_{\affmod_\nu} (\affmod_\nu \times (\bP/G)) \cong T \times (\bP/G)\text{,}\] over every point $t \in T$ we have the natural isomorphism $h\colon \obar{S}_{T,t} \to \obar{S}_{T, t}^\prime$, that extends to $\psi\colon \obar{S}_T \to \obar{S}^\prime_T$, and we define $\eta(f) = (\id_T, \psi)$.\qedhere
    \end{proof}

    \begin{lemma}\label{lemma:isomorphism on points}
      The morphism $\Phi$ induces an equivalence of groupoids $\Phi(\bC)\colon \stamod_\nu(\bC) \to \trumod_\nu(\bC)$.
    \end{lemma}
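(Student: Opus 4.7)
The plan is to check that the functor $\Phi(\bC)$ is essentially surjective and fully faithful. Unwinding the quotient-stack description, a $\bC$-point of $\stamod_\nu$ is a point $p \in \affmod_\nu(\bC)$, and a morphism $p \to p^\prime$ is an element $h \in H_\nu/G$ with $h \cdot p = p^\prime$. A $\bC$-point of $\trumod_\nu$ is the canonical model $\obar{S}$ of a Godeaux surface with torsion of order $\nu$, and morphisms are isomorphisms of such.

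For essential surjectivity, I would invoke the construction of Section~\ref{sec:godeaux}: given $\obar{S} \in \trumod_\nu(\bC)$ with universal Galois cover $X \to S$, the canonical model $\obar{X}$ embeds in $\bP$ as the zero locus of equations of the shape \eqref{eq:z5}, \eqref{eq:z4}, or \eqref{eq:z3}, for an appropriate choice of parameters $p \in \affmod_\nu(\bC)$. Then $\obar{S}$ is canonically isomorphic to $\tilde{\Phi}(p)$ by the very definition of the universal family, so $\obar{S}$ lies in the essential image of $\Phi(\bC)$.

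For fullness, take $p_1, p_2 \in \affmod_\nu(\bC)$, set $\obar{S}_i = \tilde{\Phi}(p_i)$ with covers $\obar{X}_i$, and let $\alpha\colon \obar{S}_1 \to \obar{S}_2$ be an isomorphism. By facts~(1)--(3) of \S~\ref{sec:computing}, $\alpha$ lifts to an isomorphism $\tilde{\alpha}\colon \obar{X}_1 \to \obar{X}_2$, extends to an automorphism of $\bP$, and commutes with the $G$-action, so it represents an element $h$ of $H_\nu = \N_{\Aut(\bP)}(G)$. Any two lifts of $\alpha$ differ by an element of $G$, so the class $[h] \in H_\nu/G$ is well defined; since $\tilde{\alpha}$ transports the defining equations of $\obar{X}_1$ to those of $\obar{X}_2$, we have $[h] \cdot p_1 = p_2$, and the $2$-morphism $\eta$ constructed in the previous proposition identifies $\Phi(\bC)([h])$ with $\alpha$. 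For faithfulness, if $[h], [h^\prime] \in H_\nu/G$ both induce the same $\alpha$, then $h^{-1} h^\prime$ is an automorphism of $\bP$ commuting with $G$ that descends to the identity on $\obar{S}_1$; since the quotient map $\obar{X}_1 \to \obar{S}_1$ has Galois group exactly $G$, this forces $h^{-1} h^\prime \in G$, hence $[h] = [h^\prime]$.

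The only nontrivial step is the lifting statement used in fullness: namely, that an isomorphism of canonical models lifts through the universal Galois cover coming from $\Tors(S)$. This is essentially the naturality of the torsion cover together with the fact, recalled in \S~\ref{sec:computing}, that such a lift automatically normalizes $G$; everything else is bookkeeping inside the quotient stack.
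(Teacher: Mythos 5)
Your proof is correct and follows essentially the same route as the paper: essential surjectivity comes from Reid's construction in \S~\ref{sec:godeaux}, and full faithfulness comes from the lifting facts of \S~\ref{sec:computing} together with the identification $H_\nu = \N_{\Aut(\bP)}(G)$ and the fact that the kernel of $\N_{\Aut(X)}(G) \to \Aut(S)$ is $G$. The only cosmetic difference is that you trivialize the $(H_\nu/G)$-torsor over $\Spec\bC$ and argue on arbitrary morphisms $p_1 \to p_2$, whereas the paper keeps the torsor description and phrases the second half purely in terms of automorphism groups; your version is, if anything, slightly more complete on that point.
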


    \begin{proof}
      An object of $\stamod_\nu(\bC)$ is a diagram
      \[
      \begin{tikzpicture}
        \def\x{1.5}
        \def\y{-1.2}
        \node (A0_0) at (0*\x, 0*\y) {$T$};
        \node (A0_1) at (1*\x, 0*\y) {$\affmod_\nu$};
        \node (A1_0) at (0*\x, 1*\y) {$\Spec \bC$};
        \path (A0_0) edge [->] node [auto] {$\scriptstyle{f}$} (A0_1);
        \path (A0_0) edge [->] node [auto,swap] {$\scriptstyle{\pi}$} (A1_0);
      \end{tikzpicture}
      \]
      with $\pi$ an $(H_\nu/G)$-torsor and $f$ an $(H_\nu/G)$-equivariant morphism; in other words, \[ \stamod_\nu(\bC) = \left\{(T, f) \,\middle|\, \begin{array}{@{}l@{}} \text{$T \cong H_\nu/G$ as schemes}\\ \text{$H_\nu/G$ acts freely on $T$}\\ \text{$f$ $(H_\nu/G)$-equivariant}\end{array}\right\} \] As a consequence, all the points of $T$ are mapped to points of $\affmod_\nu$ corresponding to the same Godeaux surface, modulo isomorphism, that is the image of the object via $\Phi(\bC)$.
      
      We will prove that $\Phi(\bC)$ is essentially surjective and that is bijective on morphisms.
      \begin{description}
      \item[Essentially surjective] we have to prove that for every Godeaux surface $S$ there exists a point in $\stamod_\nu(\bC)$ sent to a surface isomorphic to the canonical model of $S$. The object $(H_\nu/G, f)$ will do if $f(e) \in \affmod_\nu$ is a point corresponding to $S$, and $f$ is extended equivariantly.
      \item[Bijection on morphisms] we have to prove that automorphisms of $(T, f)$ are in a bijection with automorphisms of $\obar{S} = \Phi(\bC)(T, f)$; this is exactly what we proved in the previous section, since automorphisms of $(T,f)$ are in a bijection with stabilizers of $H_\nu/G$ over a point $f(t)$ for $t \in T$ (this does not change when $t$ changes since all $f(t)$ are in the same orbit).\qedhere
      \end{description}
    \end{proof}    

    We recall here a useful statement.
    
    \begin{lemma}\label{lemma:isomorphism criterion}
      Let $X$ and $Y$ be smooth stacks of dimension $d$; then a morphism $f\colon X \to Y$ is an isomorphism if and only if
      \begin{enumerate}
        \item\label{item:groupoid} $f(\Spec \bC)$ is an equivalence of groupoids, and
        \item\label{item:tangents} $f$ is bijective on tangent vectors.
      \end{enumerate}
    \end{lemma}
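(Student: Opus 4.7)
The plan is to reduce to the classical fact that an étale, bijective morphism between smooth schemes is an isomorphism, lifted to the stack setting via smooth atlases. The strategy splits into two main steps.

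First, I would show that hypothesis (\ref{item:tangents}) forces $f$ to be étale. Choose smooth representable atlases $u\colon U \to X$ and $v\colon V \to Y$, so that $U$ and $V$ are smooth schemes, and lift the composition $U \to X \to Y$ to a morphism $\tilde{f}\colon U \to V$ (refining $U$ by an étale cover if necessary). The tangent space of a smooth stack at a point is a quotient of the tangent space of its atlas by the tangent space of the fiber of the atlas, and the fibers of $u$ and $v$ have the same dimension since $X$ and $Y$ are smooth of the same dimension $d$. Consequently, the bijection on tangent vectors for $f$ translates into a bijection on tangent spaces for $\tilde{f}$ at every closed point, so by the standard Jacobian criterion $\tilde{f}$ is étale, and hence so is $f$.

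Second, I would combine étaleness with hypothesis (\ref{item:groupoid}) to conclude that $f$ is an isomorphism. Essential surjectivity of $f(\Spec \bC)$ yields surjectivity of $f$ on closed points, hence of $\tilde{f}$; being étale and surjective between smooth schemes of the same dimension, $\tilde{f}$ is a faithfully flat étale cover. The bijection on automorphism groups coming from the groupoid equivalence implies that the diagonal $\Delta_f \colon X \to X \times_Y X$ is an isomorphism on $\bC$-points, so $f$ is a monomorphism of stacks. An étale monomorphism which is surjective is an isomorphism, therefore $f$ is an isomorphism.

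The main obstacle is the careful bookkeeping of tangent spaces, étaleness, and the behavior of the diagonal as one passes between a smooth stack and its atlas: one must verify that the hypotheses on $f$ really do descend to the corresponding properties for $\tilde{f}$, and then lift back up. All of these are standard facts in the theory of Deligne–Mumford stacks, so in a full writeup I would invoke the appropriate references rather than reprove them.
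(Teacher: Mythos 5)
The paper does not actually prove this lemma: it is introduced with ``We recall here a useful statement'' and stated without proof, so there is no argument of the author's to compare yours against. Your sketch is the standard one and is essentially correct for the stacks at hand, which are Deligne--Mumford (quotients of a scheme by a finite group, and a moduli stack of canonically polarized surfaces with finite reduced automorphism groups); you should make that hypothesis explicit, since for general Artin stacks ``bijective on tangent vectors'' does not control infinitesimal automorphisms and the criterion as stated would need amending. Two smaller points to tighten. First, the sentence ``the fibers of $u$ and $v$ have the same dimension since $X$ and $Y$ are smooth of the same dimension $d$'' is not right as written --- smooth atlases can have arbitrary relative dimension; what you actually need is $\dim U - \dim(\mathrm{fib}\, u) = d = \dim V - \dim(\mathrm{fib}\, v)$, or more cleanly you can take \emph{\'etale} atlases (which exist in the DM case), so that $T_{\tilde x}U \cong T_{u(\tilde x)}X$ on the nose and the Jacobian criterion applies directly to $\tilde f$. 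Second, in the last step it is worth spelling out the chain: full faithfulness of $f(\Spec\bC)$ plus unramifiedness makes $\Delta_f$ an open immersion that hits every $\bC$-point of $X\times_Y X$, hence an isomorphism, so $f$ is a monomorphism; an \'etale monomorphism is an open immersion, and one that is surjective on closed points of a finite-type stack over $\bC$ is an isomorphism. With those adjustments the argument is complete and is exactly the kind of proof the author is implicitly invoking.
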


    \begin{remark}\label{remark:chi}
      For a Godeaux surface $S$, Riemann-Roch yields \[ \chi(\T_S) = 2 \sK_S^2 - 10 \chi(\sO_S) = -8\text{,} \] hence $\ho^2(S, \T_S) = 0$ if and only if $\ho^1(S, \T_S) = 8$. If $S$ is a Godeaux surface with singular canonical model, we can still define the Euler characteristic of the pair  $(\Omega_{\obar{S}}, \sO_{\obar{S}})$ to be \[ \chi(\Omega_{\obar{S}}, \sO_{\obar{S}}) = \sum_{i=0}^2 \ext^i(\Omega_{\obar{S}}, \sO_{\obar{S}})\text{,} \] generalizing the previous one. We know that $S$ can be deformed to a Godeaux surface $S^\prime$ with smooth canonical model; since the dimensions of the $\Ext$ groups are deformation invariants, the previous computation ensures $\chi(\Omega_{\obar{S}}, \sO_{\obar{S}}) = 8$. Moreover, since $\obar{S}$ is of general type, $\ext^0(\Omega_{\obar{S}}, \sO_{\obar{S}}) = 0$, and we obtain again \[ \ext^1(\Omega_{\obar{S}}, \sO_{\obar{S}}) = 8 \iff \ext^2(\Omega_{\obar{S}}, \sO_{\obar{S}}) = 0\text{.}\]
    \end{remark}

    \begin{remark}\label{remark:necessary for isomorphism}
      To show that $\Phi\colon \stamod_\nu \to \trumod_\nu$ is an isomorphism, it is enough to prove that for every Godeaux surface $S$ with torsion of order $\nu$, we have:
      \begin{enumerate}
        \item $\ext^1(\Omega_{\obar{S}}, \sO_{\obar{S}}) = 8$;
        \item $\Phi$ is bijective on tangent vectors.
      \end{enumerate}
      
      Clearly $\stamod_\nu$ is smooth of dimension $8$. The first condition ensures, by Remark~\ref{remark:chi}, that also the moduli stack $\trumod_\nu$ is so. Hence we can apply the criterion of Lemma~\ref{lemma:isomorphism criterion}: condition~\ref{lemma:isomorphism criterion}.\ref{item:groupoid} is already proved in Lemma~\ref{lemma:isomorphism on points}, while condition~\ref{lemma:isomorphism criterion}.\ref{item:tangents} is the second requirement listed here.
    \end{remark}
    
    We will prove the two conditions in the case $\nu = 5$. In the following, we will write ``Godeaux surface'' for ``Godeaux surface with torsion of order $5$''.

    \begin{lemma}\label{lemma:vanishing of group}
      Let $\obar{X} \subseteq \bP^3$ be a quintic hypersurface with at most RDP as singularities; then $\Ho^1(\obar{X}, \T_{\bP^n}|_{\obar{X}}) $ vanishes.
    \end{lemma}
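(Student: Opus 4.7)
The plan is to use the Euler sequence on $\bP^3$ restricted to $\bar{X}$. Since $\bar{X}$ is a hypersurface, it is Gorenstein, and by adjunction $\sK_{\bar{X}} = \sO_{\bar{X}}(1)$. The resolution of singularities $\mu\colon X \to \bar{X}$ contracts only rational (RDP) exceptional curves, so $R^i\mu_*\sO_X = 0$ for $i>0$ and $\mu^*\sK_{\bar X} = \sK_X$; hence $H^i(\bar{X},\sO_{\bar{X}}) = H^i(X,\sO_X)$ and Serre duality on $\bar{X}$ behaves as usual.

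First, restrict the Euler sequence $0 \to \sO_{\bP^3} \to \sO_{\bP^3}(1)^{\oplus 4} \to \T_{\bP^3} \to 0$ to $\bar{X}$ (it is a sequence of locally free sheaves, so it stays exact) and take the long exact sequence of cohomology. The term I want to vanish fits into
\begin{equation*}
  \Ho^1(\sO_{\bar{X}}(1))^{\oplus 4} \longrightarrow \Ho^1(\T_{\bP^3}|_{\bar{X}}) \longrightarrow \Ho^2(\sO_{\bar{X}}) \stackrel{\mu}{\longrightarrow} \Ho^2(\sO_{\bar{X}}(1))^{\oplus 4}\text{,}
\end{equation*}
so it suffices to show that $\Ho^1(\sO_{\bar{X}}(1)) = 0$ and that $\mu$ is injective.

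For the first vanishing, $\Ho^1(\bar{X},\sO_{\bar{X}}(1)) = \Ho^1(\bar{X},\sK_{\bar{X}})$, which by Serre duality is dual to $\Ho^1(\bar{X},\sO_{\bar{X}}) = \irr(X) = 0$. For the injectivity of $\mu$, I will dualize: the map $\sO_{\bar X} \to \sO_{\bar X}(1)^{\oplus 4}$ is given by the four coordinate sections $(x_1, x_2, x_3, x_4)$, so by Serre duality the dual of $\mu$ is the multiplication map
\begin{equation*}
  \Ho^0(\bar X, \sO_{\bar X})^{\oplus 4} \longrightarrow \Ho^0(\bar X, \sO_{\bar X}(1)) = \Ho^0(\bar X, \sK_{\bar X})\text{,}\qquad (a_i) \longmapsto \sum a_i x_i\text{.}
\end{equation*}
Both sides are $4$-dimensional (the target equals $\gengeom(X) = 4$, by pullback to $X$) and the map is surjective because by construction $x_1,\dots,x_4$ form a basis of $\Ho^0(X,\sK_X)$; thus it is an isomorphism, and $\mu$ is an isomorphism as well.

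I expect no real obstacle: the only subtle points are using that RDP singularities are rational and Gorenstein (so that Serre duality applies verbatim on $\bar X$ and cohomological invariants agree with those of $X$), and remembering that the four $x_i$ are precisely a basis of $\Ho^0(\sK_X)$, which is exactly the reason the canonical model sits in $\bP(x_1,x_2,x_3,x_4)$ in the first place.
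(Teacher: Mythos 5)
Your argument is correct and is essentially the paper's proof read through Serre duality: the paper dualizes at the outset (identifying $\Ho^1(\obar{X}, \T_{\bP^3}|_{\obar{X}})\spcheck$ with $\Ho^1(\obar{X}, \Omega_{\bP^3}|_{\obar{X}}(1))$ and using the twisted conormal Euler sequence), whereas you use the tangent Euler sequence directly and dualize the boundary map at the end; in both cases the two inputs are exactly $\irr(\obar{X})=0$ and the fact that the coordinates span $\Ho^0(\obar{X},\sO_{\obar{X}}(1))$. The extra care you take (rationality of RDPs, Gorenstein Serre duality, functoriality of duality for the map $\sO_{\obar X}\to\sO_{\obar X}(1)^{\oplus 4}$) is all sound.
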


    \begin{proof}
      We will prove that $\Ho^1(\obar{X}, \T_{\bP^3}|_{\obar{X}})\spcheck = 0$. By Serre duality and since by adjunction $\omega_{\obar{X}} = \sO_{\obar{X}}(1)$, this is equal to $\Ho^1(\obar{X}, \Omega_{\bP^3}|_{\obar{X}}(1))$.

      From the cohomology of the Euler sequence tensored with $\sO_{\obar{X}}(1)$, we get \[ \Ho^0(\obar{X}, \sO_{\obar{X}}^{\oplus (n+1)}) \to \Ho^0(\obar{X}, \sO_{\obar{X}}(1)) \to \Ho^1(\obar{X}, \Omega_{\bP^3}|_{\obar{X}}(1)) \to \Ho^1(\obar{X}, \sO_{\obar{X}}^{\oplus(n+1)})\text{;}\] the first map is surjective, while the last group is equal to ${\Ho^1(\obar{X}, \sO_{\obar{X}})}^{\oplus(n+1)} = 0$ since $\irr(\obar{X}) = 0$. Hence, $\Ho^1(\obar{X}, \Omega_{\bP^3}|_{\obar{X}}(1)) = 0$.\qedhere
    \end{proof}

    \begin{lemma}\label{lemma:moduli smooth}
      The moduli stack $\trumod_5$ of Godeaux surfaces is smooth of dimension $8$.
    \end{lemma}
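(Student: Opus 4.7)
The plan is to reduce the smoothness of $\trumod_5$ to the vanishing $\ext^2(\Omega_{\obar S}, \sO_{\obar S}) = 0$ for every Godeaux surface $S$ with torsion of order $5$. By Remark~\ref{remark:chi} this is equivalent to $\ext^1(\Omega_{\obar S}, \sO_{\obar S}) = 8$, and standard deformation theory for canonical models then gives that $\trumod_5$ is smooth of dimension $8$ at $[S]$.

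First I would pass to the Galois cover $\bar\pi\colon \obar X \to \obar S$. A routine check, using that $G \cong \bZ_5$ acts freely on the smooth cover $X$ and that an element of order $5$ cannot act nontrivially on any ADE configuration of $(-2)$-curves (whose graph automorphism groups all have order at most $6$), shows that $G$ acts freely on $\obar X$ too; hence $\bar\pi$ is a finite \'etale $G$-cover and \'etale descent for $\Ext$ gives
\[
\Ext^i(\Omega_{\obar S}, \sO_{\obar S}) \cong \Ext^i(\Omega_{\obar X}, \sO_{\obar X})^G.
\]
It is therefore enough to prove $\Ext^2(\Omega_{\obar X}, \sO_{\obar X}) = 0$.

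Next, I would apply $\Hom(-, \sO_{\obar X})$ to the conormal sequence of the quintic hypersurface $\obar X \subset \bP^3$,
\[
0 \to \sO_{\obar X}(-5) \to \Omega_{\bP^3}|_{\obar X} \to \Omega_{\obar X} \to 0,
\]
which is short exact because $\obar X$ is a local complete intersection. Since the outer terms are locally free, their higher Ext's coincide with ordinary cohomology, and the relevant segment of the long exact sequence reads
\[
\Ho^1(\obar X, \T_{\bP^3}|_{\obar X}) \to \Ho^1(\obar X, \sO_{\obar X}(5)) \to \Ext^2(\Omega_{\obar X}, \sO_{\obar X}) \to \Ho^2(\obar X, \T_{\bP^3}|_{\obar X}).
\]
The first term is zero by Lemma~\ref{lemma:vanishing of group}. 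Since $\omega_{\obar X} = \sO_{\obar X}(1)$ by adjunction, Serre duality identifies the second term with $\Ho^1(\obar X, \sO_{\obar X}(-4))^\vee$, which vanishes via $0 \to \sO_{\bP^3}(-9) \to \sO_{\bP^3}(-4) \to \sO_{\obar X}(-4) \to 0$. Finally, Lemma~\ref{lemma:vanishing of group} plugged into the Euler sequence restricted to $\obar X$ forces $\Ho^2(\sO_{\obar X}) \hookrightarrow \Ho^2(\sO_{\obar X}(1))^{\oplus 4}$ to be an isomorphism between two $4$-dimensional spaces (via Serre duality and $\omega_{\obar X} = \sO_{\obar X}(1)$), so that the cokernel $\Ho^2(\obar X, \T_{\bP^3}|_{\obar X})$ is zero.

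The main obstacle I anticipate is not any single cohomological step but the combined bookkeeping around the rational double points: verifying that the $G$-action on $\obar X$ is free (so that $\bar\pi$ is genuinely \'etale and the $\Ext$-descent formula applies), together with the left-exactness of the conormal sequence along the singular locus (which holds because a hypersurface is a local complete intersection). Once these foundational points are in place and Lemma~\ref{lemma:vanishing of group} is granted, the cohomological computations above are routine.
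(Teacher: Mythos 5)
Your proof is correct, and it takes a genuinely different (though closely related) route through the same long exact sequence. The paper also reduces via Remark~\ref{remark:chi} to a statement about $\Ext^\bullet(\Omega_{\obar{X}}, \sO_{\obar{X}})$ and also applies $\Hom(-,\sO_{\obar{X}})$ to the conormal sequence of the quintic; but it uses the segment ending in $\Ext^1$, which Lemma~\ref{lemma:vanishing of group} and the general-type hypothesis turn into the short exact sequence $0 \to \Ho^0(\obar{X}, \T_{\bP^3}|_{\obar{X}}) \to \Ho^0(\obar{X}, \sO_{\obar{X}}(5)) \to \Ext^1(\Omega_{\obar{X}}, \sO_{\obar{X}}) \to 0$, then takes $\bZ_5$-invariants and computes $\ext^1(\Omega_{\obar{S}},\sO_{\obar{S}}) = 11 - 3 = 8$ by counting invariant quintic monomials and invariant (diagonal) infinitesimal automorphisms of $\bP^3$. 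You instead work with the next segment and prove the stronger statement $\Ext^2(\Omega_{\obar{X}}, \sO_{\obar{X}}) = 0$ outright, feeding it back through Remark~\ref{remark:chi}; the price is the two extra vanishings $\Ho^1(\obar{X},\sO_{\obar{X}}(5)) = 0$ and $\Ho^2(\obar{X}, \T_{\bP^3}|_{\obar{X}}) = 0$, both of which you establish correctly (for the first you could even skip Serre duality and use $0 \to \sO_{\bP^3} \to \sO_{\bP^3}(5) \to \sO_{\obar{X}}(5) \to 0$ directly; for the second, the injection of the $4$-dimensional $\Ho^2(\sO_{\obar{X}})$ into the $4$-dimensional $\Ho^2(\sO_{\obar{X}}(1))^{\oplus 4}$ is indeed forced by Lemma~\ref{lemma:vanishing of group}). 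What your route buys is that you never need the invariant-theoretic count $11-3=8$ (which the paper in any case reuses in the subsequent lemma on tangent vectors), and you prove unobstructedness of $\obar{X}$ itself, not only of its $\bZ_5$-quotient. You also supply a justification --- freeness of the $G$-action on $\obar{X}$ via the order-$5$-versus-ADE-symmetry argument, plus \'etale descent --- for the identification $\Ext^i(\Omega_{\obar{S}},\sO_{\obar{S}}) \cong \Ext^i(\Omega_{\obar{X}},\sO_{\obar{X}})^{\bZ_5}$, which the paper asserts without comment; that argument is sound. The one step both you and the paper leave implicit is that for a canonical model with rational double points the tangent/obstruction spaces of $\trumod_5$ are computed by $\Ext^i(\Omega_{\obar{S}},\sO_{\obar{S}})$ rather than by the full cotangent complex; since this is the same (standard, and harmless for normal l.c.i.\ surfaces) assumption in both arguments, it is not an objection to your proof.
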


    \begin{proof}
      By Remark~\ref{remark:chi}, it is enough to show $\ext^1(\Omega_{\obar{S}}, \sO_{\obar{S}}) = 8$ for every $S$.

      Let $X \to S$ be the cover associated to the torsion of $S$; we have seen that $\obar{X}$, the canonical model of $X$, embeds in $\bP^3$ as a quintic hypersurface with at most RDP; also, $\Ext^i(\Omega_{\obar{S}}, \sO_{\obar{S}})$ is just the $\bZ_5$-invariant part of $\Ext^i(\Omega_{\obar{X}}, \sO_{\obar{X}})$. Applying the functor $\Hom(_, \sO_{\obar{X}})$ to \[ 0 \to \sO_{\obar{X}}(-5) \to \Omega_{\bP^3}|_{\obar{X}} \to \Omega_{\obar{X}} \to 0\text{,} \] we get the exact sequence
      \begin{multline}\label{eq:exact sequence}
        \Hom(\Omega_{\obar{X}}, \sO_{\obar{X}}) \to \Ho^0(\obar{X}, \T_{\bP^3}|_{\obar{X}}) \to \Ho^0(\obar{X}, \sO_{\obar{X}}(5)) \to\\
        \to \Ext^1(\Omega_{\obar{X}}, \sO_{\obar{X}}) \to \Ho^1(\obar{X}, \T_{\bP^3}|_{\obar{X}})\text{.}
      \end{multline}
      The first group is zero because $\obar{X}$ is of general type, while we already proved that the last one vanishes in Lemma~\ref{lemma:vanishing of group}. Therefore we have a short exact sequence and to compute $\ext^1(\Omega_{\obar{S}}, \sO_{\obar{S}})$ we observe that
      \begin{enumerate}
        \item $\Ho^0(\obar{X}, \T_{\bP^3}|_{\obar{X}})$ has the same dimension as the group $\bP\GL(4)$, $15$; its $\bZ_5$-invariant part has dimension $3$, since it parametrizes infinitesimal deformation of linear isomorphisms of $\bP^3$ commuting with the action of $G$, and these correspond to diagonal matrices;
        \item $\Ho^0(\obar{X}, \sO_{\obar{X}}(5))$ has dimension $\Ho^0(\bP^3, \sO_{\bP^3}(5)) - 1 = \binom{3 + 5}{5}-1 = 55$; as we saw before, $\ho^0(\bP^3, \sO_{\bP^3}(5))^{\bZ_5} = 12$, then $\ho^0(\obar{X}, \sO_{\obar{X}}(5))^{\bZ_5} = 11$.
      \end{enumerate}  
      In particular, we obtain that $\ext^1(\Omega_{\obar{S}}, \sO_{\obar{S}}) = \ext^1(\Omega_{\obar{X}}, \sO_{\obar{X}})^{\bZ_5} = 11 - 3 = 8$.\qedhere
    \end{proof}

    \begin{lemma}
      The morphism $\Phi$ is bijective on tangent vectors.
    \end{lemma}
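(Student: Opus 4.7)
The plan is to exhibit $d\Phi$ as an injective linear map between two $8$-dimensional vector spaces, which then forces it to be an isomorphism.

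First I would identify both tangent spaces. Since $\stamod_5 = [\affmod_5 / (H_5/G)]$ is a quotient stack by a finite (hence $0$-dimensional) group, its tangent space at the point $[S]$ lying over $(b,c)\in\affmod_5$ is canonically $T_{(b,c)}\affmod_5\cong\bC^8$. On the other side, by Lemma~\ref{lemma:moduli smooth}, $T_{\obar{S}}\trumod_5 = \Ext^1(\Omega_{\obar{S}},\sO_{\obar{S}}) \cong \Ext^1(\Omega_{\obar{X}},\sO_{\obar{X}})^{\bZ_5}$ is also $8$-dimensional.

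Next I would describe $d\Phi$ explicitly through the universal family over $\affmod_5$. A tangent vector $(\dot{b},\dot{c})$ corresponds to a $\Spec\bC[\epsilon]$-point of $\affmod_5$ based at $(b,c)$; its image under $\Phi$ is the first-order deformation of $\obar{X}$ (descended by $G$) cut out by the equation $q_0 + \epsilon\,\dot{q}_0 = 0$, where
\[
\dot{q}_0 = \dot{b}_1 x_2 x_3^3 x_4 + \dot{b}_2 x_1^3 x_3 x_4 + \dot{b}_3 x_1 x_2 x_4^3 + \dot{b}_4 x_1 x_2^3 x_3 + \dot{c}_1 x_2^2 x_3 x_4^2 + \dot{c}_2 x_1 x_3^2 x_4^2 + \dot{c}_3 x_1^2 x_2^2 x_4 + \dot{c}_4 x_1^2 x_2 x_3^2\text{.}
\]
Crucially, $\dot{q}_0$ is $\bZ_5$-invariant and contains no $x_j^5$ summand. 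Through the exact sequence~\eqref{eq:exact sequence} of Lemma~\ref{lemma:moduli smooth}, the class $[\dot{q}_0]\in\Ext^1(\Omega_{\obar{X}},\sO_{\obar{X}})^{\bZ_5}$ is by construction $d\Phi(\dot{b},\dot{c})$.

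Finally I would verify injectivity by a coefficient computation. Suppose $d\Phi(\dot{b},\dot{c}) = 0$. Exactness of~\eqref{eq:exact sequence} means $\dot{q}_0$ is the restriction to $\obar{X}$ of $\xi(q_0)$ for some $\xi\in\Ho^0(\obar{X},\T_{\bP^3}|_{\obar{X}})^{\bZ_5}$. The $\bZ_5$-invariant vector fields on $\bP^3$ are spanned by the diagonals $x_i\,\partial/\partial x_i$, so, lifting the equality to $\Ho^0(\bP^3,\sO(5))^{\bZ_5}$, there exist $\lambda_1,\dots,\lambda_4,\nu\in\bC$ with
\[
\dot{q}_0 = \sum_{i=1}^{4}\lambda_i\,x_i\,\frac{\partial q_0}{\partial x_i} + \nu\,q_0\text{.}
\]
Matching the coefficient of $x_j^5$ — zero on the left, $5\lambda_j+\nu$ on the right — forces $\lambda_1=\cdots=\lambda_4=-\nu/5$, whereupon Euler's identity $\sum_i x_i\,\partial q_0/\partial x_i = 5q_0$ collapses the right-hand side to zero. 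Hence $\dot{q}_0 = 0$ and all $\dot{b}_i,\dot{c}_i$ vanish. The main subtlety is identifying $d\Phi$ correctly through~\eqref{eq:exact sequence}; once that is done, the argument reduces to the clean observation that the normalization fixing the $x_j^5$-coefficients to $1$ already kills both the scalar factor and the three-dimensional $\bZ_5$-invariant diagonal freedom inside $\bP\GL(4)$.
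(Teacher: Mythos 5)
Your proof is correct and follows essentially the same route as the paper: identify $\T_{\stamod_5,[S]}$ with $\T_{\affmod_5,[S]}$ and $\T_{\trumod_5,[\obar{S}]}$ with $\Ext^1(\Omega_{\obar{X}},\sO_{\obar{X}})^{\bZ_5}$, realize $d\Phi$ as the connecting map in the exact sequence~\eqref{eq:exact sequence}, and show that the image of the $\bZ_5$-invariant (diagonal) part of $\Ho^0(\obar{X},\T_{\bP^3}|_{\obar{X}})$ meets the space of deformations without $x_j^5$-terms only in zero. Your explicit coefficient matching with the $\nu\,q_0$ correction and Euler's identity in fact handles the mod-$q_0$ ambiguity more carefully than the paper's brief remark that $Af$ has nonvanishing $x_i^5$-coefficients.
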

    
    \begin{proof}
      Fix a Godeaux surface $S$. Then $\T_{\trumod_5,[\obar{S}]} = \Ext^1(\Omega_{\obar{S}}, \sO_{\obar{S}})$, while $\T_{\stamod_5,[S]} = \T_{\affmod_5,[S]}$, since the projection $\affmod_5 \to \stamod_5$ is an \'etale cover. The morphism between the tangent spaces induced by $\Phi$ is the restriction, first to the $\bZ_5$-invariant part, then to $\T_{\affmod_5,[S]}$, of the map \[\Ho^0(\obar{X}, \sO_{\obar{X}}(5)) \to \Ext^1(\Omega_{\obar{X}}, \sO_{\obar{X}}) \] in the exact sequence~\eqref{eq:exact sequence}.

      Let $f$ be the quintic polynomial defining $\obar{X}$; then $\Ho^0(\obar{X}, \sO_{\obar{X}}(5))^{\bZ_5}$, as the restriction of $\Ho^0(\bP^3, \sO_{\bP^3}(5))^{\bZ_5}$, consists of quintic polynomials invariant with respect to the action of $G$; but they can be interpreted also as infinitesimal deformations of $f$ inside the quintic polynomials invariant with respect to $G$. In the same spirit, $\Ho^0(\obar{X}, \T_{\bP^3}|_{\obar{X}})^{\bZ_5}$ is the space of infinitesimal deformations of the identity matrix inside the matrices invariants with respect to $G$, that are the diagonal matrices.
      
      In other words, an element of $\Ho^0(\obar{X}, \T_{\bP^3}|_{\obar{X}})^{\bZ_5}$ can be represented by an infinitesimal deformation $I + \epsilon A$ with $A$ a diagonal matrix, modulo multiplication with scalars; to this, we associate an infinitesimal deformation of polynomials $(I + \epsilon A) f$, represented by the polynomial $Af$ in the space $\Ho^0(X, \sO_{\bP^3}(5))^{\bZ_5}$. Since $A$ is diagonal, $Af$ has exactly the same monomials of $f$, only with different coefficients, and in particular it is of the form $\sum a_i x_i^5 + \cdots$ with $a_i \neq 0$, and therefore does not intersect $\T_{\affmod_5, [S]}$, that contains only monomials without the terms $x_i^5$.\qedhere
    \end{proof}

    The last two lemmas, in view of Remark~\ref{remark:necessary for isomorphism}, yield the following theorem.

    \begin{theorem}\label{theorem:moduli godeaux are the quotient}
      The morphism $\Phi\colon \stamod_5 \to \trumod_5$ is an isomorphism of stacks.
    \end{theorem}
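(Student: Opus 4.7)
The plan is to invoke the isomorphism criterion of Lemma~\ref{lemma:isomorphism criterion}: I must check that both $\stamod_5$ and $\trumod_5$ are smooth of the same dimension, and then verify the two remaining conditions of the criterion separately. The whole argument is a matter of assembling the preparatory results that have just been established.

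First I would observe that $\stamod_5 = [\affmod_5 / (H_5/G)]$ is the quotient of an open subset of $\bA^8$ by a finite group, hence manifestly smooth of dimension $8$. For $\trumod_5$, I would invoke Remark~\ref{remark:chi}, which for a Godeaux surface $S$ reduces smoothness at the point $[\obar{S}]$ to the equality $\ext^1(\Omega_{\obar{S}}, \sO_{\obar{S}}) = 8$; this is exactly Lemma~\ref{lemma:moduli smooth}, obtained from the exact sequence~\eqref{eq:exact sequence} together with Lemma~\ref{lemma:vanishing of group} and the explicit count of $\bZ_5$-invariants on $\Ho^0(\obar{X}, \sO_{\obar{X}}(5))$ and on $\Ho^0(\obar{X}, \T_{\bP^3}|_{\obar{X}})$.

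With both stacks smooth of the same dimension, the criterion (conveniently packaged in Remark~\ref{remark:necessary for isomorphism}) reduces the theorem to its two remaining conditions. Condition~\ref{lemma:isomorphism criterion}.\ref{item:groupoid} is precisely Lemma~\ref{lemma:isomorphism on points}, and condition~\ref{lemma:isomorphism criterion}.\ref{item:tangents} is the preceding lemma. Concatenating these gives the desired isomorphism.

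The main obstacle is not in this final assembly but in the two preparatory lemmas that make it possible: the computation of $\ext^1(\Omega_{\obar{S}}, \sO_{\obar{S}})$ requires the deformation-invariance argument that relates singular canonical models to a smoothing, and the tangent-bijection step exploits the fact that $\bZ_5$-invariant infinitesimal deformations of $\id \in \bP\GL(4)$ are exactly the diagonal matrices $A$, so that $Af$ has the same monomials as $f$ and in particular retains the terms $x_i^5$ which are forbidden in $\T_{\affmod_5,[S]}$. Once these two inputs are in hand, the theorem itself is formal.
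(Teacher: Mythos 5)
Your proposal is correct and follows the paper's own route exactly: the paper derives the theorem by combining Lemma~\ref{lemma:moduli smooth} and the tangent-bijection lemma via Remark~\ref{remark:necessary for isomorphism}, which in turn packages Lemma~\ref{lemma:isomorphism criterion} and Lemma~\ref{lemma:isomorphism on points}. Your assembly of these ingredients, including the observation that the real content lies in the preparatory lemmas rather than in the final step, matches the paper's argument.
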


  \section{Inertia stacks}\label{sec:inertia}

    In this section we will compute the inertia stack of $\stamod_{\nu}$ for $\nu \in \{3,4,5\}$. Since $\stamod_3$ has trivial automorphisms groups (i.e., it is an algebraic space), we will work only on $\stamod_4$ and $\stamod_5$. These are quotients of an open subscheme of $\bA^8$ by an explicit finite group of projective matrices. Hence we can work out the components of the inertia stacks $\I(\stamod_4)$ and $\I(\stamod_5)$ from these representations.
    
    \subsection{Torsion of order five} 
      Let us have a look at Figure~\ref{fig:z5}; our problem is to identify automorphisms of Godeaux surfaces lying in different subspaces of $\affmod_5$. In the following, we will denote a generic surface in $\affmod_5$ as $S_{\affmod_5}$; in the same way, we define $S_Q$, $S_P$, $S_H$, $S_O$.
      
      In this case we do not need many computations: for example, there is a unique way to identify $\Aut(S_Q) \cong \bZ_2$ inside $\Aut(S_P) \cong \bZ_4$; the only ambiguities come up when we want to see where $\Aut(S_P) \cong \bZ_4$ and $\Aut(S_H) \cong \bZ_5$ goes inside $\Aut(S_O) \cong \bZ_5^2 \ltimes \bZ_4$. These are not actual problems, since to construct $\I(\stamod_5)$ we only need to see which automorphisms go to coincide when viewed in a larger group. Then it is obvious that only the identities will coincide in $\Aut(S_0)$, since the other automorphisms have different orders.
      
      In order to explain the general principle, we will give the computation even if it is not really necessary. In the following, we will write all groups $\Aut(S)$ as subgroup of the group $\Aut(S_0)$; this one is the quotient by $G \cong \bZ_5$ of the group $H_5 \cong \bZ_5^3 \ltimes \bZ_5^\star$. We will denote the matrix \[ \diag(1, \xi^{i_2}, \xi^{i_3}, \xi^{i_4}) P_{\sigma^h} \in H_\nu\text{,}\] where $\sigma$ is the permutation $(2,1,3,4)$, with $(i_2, i_3, i_4, h)$; $G$ lies inside $H_\nu$ as the subgroup generated by $(1,2,3,0)$. The same program we used to compute the abstract automorphisms groups gives us also the automorphisms groups embedded in $\bP\GL(4)$; in particular we obtain the following representations in $H_\nu/G$:
      \begin{align*}
        \Aut(S_{\stamod_5}) &= \langle(0,0,0,0)\rangle\text{,}\\
        \Aut(S_Q) &= \langle (0,0,0,2)\rangle\text{,}\\
        \Aut(S_P) &= \langle (0,0,0,1)\rangle\text{,}\\
        \Aut(S_H) &= \langle (0,0,1,0)\rangle\text{,}\\
        \Aut(S_O) &= \langle (1,0,0,0), (0,1,0,0), (0,0,0,1)\rangle\text{.}
      \end{align*}
      Note that these are the embedded automorphisms groups for just one component of $H$, $P$ and $Q$: indeed, if we do not choose carefully the components we may end with incompatible groups. We just have to do a choice of components that satisfies the Hasse diagram of containments even in $\affmod_5$.
      
      Once we have this explicit description, we know how automorphisms glue amongst different subschemes of $\affmod_5$, and we can write down the $100$ components of $\I(\stamod_5)$: 
      \begin{multline*}
        \I(\stamod_5) = (\stamod_5, (0,0,0,0)) \sqcup (Q, (0,0,0,2)) \sqcup \bigsqcup_{\mathclap{h \in \{1,3\}}}(P, (0,0,0,h)) \sqcup\\
        \sqcup \bigsqcup_{\mathclap{i \in \{1,2,3,4\}}} (H, (0,0,i,0)) \sqcup \bigsqcup_{\mathclap{i_1,i_2,i_3,h}} (O, (i_1,i_2,i_3,h))\text{,}
      \end{multline*}
      where the last union runs over all the $92$ elements of $\Aut(S_0)$ not previously considered. We can find automorphisms groups of all subcomponents of the components of the inertia stack by computing centralizers. For example, the automorphisms group of $O \subseteq (Q, (0,0,0,2))$ is the centralizer of $(0,0,0,2)$ inside $H_\nu/G$, that is $\bZ_5 \ltimes \bZ_5^\star$. It is easy to use \name{GAP} to compute all centralizers of each automorphism inside $H_\nu/G$ (we do not need the other centralizers since all other groups are abelian and so the centralizers are trivial).

      The following picture represents all the components of the inertia stack with all their stacky subcomponents (obviously with fake dimensions).
      \[\begin{tikzpicture}
        \def\x{1.2};
        \def\y{.6};
        
        \draw[very thin] (1*\x,1*\y) -- (3*\x,1*\y) -- (3*\x,5*\y) -- (1*\x,5*\y) -- cycle; 
        \draw[very thin] (2*\x,4*\y) -- (3*\x,5*\y); 
        \draw[very thin] (2*\x,0*\y) -- (3*\x,1*\y); 
        \draw[very thin] (0*\x,0*\y) -- (1*\x,1*\y); 
        \draw[very thin] (0*\x,4*\y) -- (1*\x,5*\y); 
        \draw[fill=white] (2.5*\x,2.5*\y) -- (3*\x,3.3*\y) -- (1*\x,3.3*\y) -- (0.5*\x,2.5*\y); 
        \draw[thick] (1.5*\x,0.5*\y) -- (1.5*\x,4.5*\y); 
        \draw[fill=white] (0.5*\x, 2.5*\y) -- (0*\x,1.7*\y) -- (2*\x,1.7*\y) -- (2.5*\x,2.5*\y); 
        \draw[thick] (0.5*\x,2.5*\y) -- (2.5*\x,2.5*\y); 
        \fill (1.5*\x,2.5*\y) circle (.05); 
        \draw[very thin] (0*\x,0*\y) -- (2*\x,0*\y) -- (2*\x,4*\y) -- (0*\x,4*\y) -- cycle; 
        \path (1.5*\x,-1*\y) node {$\scriptstyle{(\affmod_5, (0,0,0,0))}$};
        \node at (-.3*\x,4.5*\y) {$\scriptstyle{\stamod_5, \{e\}}$};
        \node at (-.35*\x,1.7*\y) {$\scriptstyle{Q, \bZ_2}$};
        \node at (2.7*\x,2.1*\y) {$\scriptstyle{P, \bZ_4}$};
        \node at (1.8*\x,4.5*\y) {$\scriptstyle{H, \bZ_5}$};
        \node at (1.2*\x,2.1*\y) {$\scriptstyle{O, \bZ_5^2 \ltimes \bZ_4}$};

        \begin{scope}[xshift=3.5*\x cm]
          \draw[fill=white] (0*\x,1.7*\y) -- (2*\x,1.7*\y) -- (3*\x,3.3*\y) -- (1*\x,3.3*\y) -- cycle; 
          \draw[thick] (0.5*\x,2.5*\y) -- (2.5*\x,2.5*\y); 
          \fill (1.5*\x,2.5*\y) circle (.05); 
          \path (1.5*\x,-1*\y) node {$\scriptstyle{(Q, (0,0,0,2))}$};
          \node at (1.5*\x,3.6*\y) {$\scriptstyle{Q, \bZ_2}$};
          \node at (2.7*\x,2.1*\y) {$\scriptstyle{P, \bZ_4}$};
          \node at (1.2*\x,2.1*\y) {$\scriptstyle{O, \bZ_5 \ltimes \bZ_4}$};
        \end{scope}

        \begin{scope}[xshift=6.5*\x cm]
          \draw[thick] (0.5*\x,3*\y) -- (2.5*\x,3*\y); 
          \fill (1.5*\x,3*\y) circle (.05); 
          \draw[thick] (0.5*\x,2*\y) -- (2.5*\x,2*\y); 
          \fill (1.5*\x,2*\y) circle (.05); 
          \path (1.5*\x,-1*\y) node {$\scriptstyle{\bigsqcup (P, (0,0,0,h))}$};
          \node at (2.7*\x,1.6*\y) {$\scriptstyle{P, \bZ_4}$};
          \node at (1.2*\x,1.6*\y) {$\scriptstyle{O, \bZ_4}$};
        \end{scope}

        \begin{scope}[xshift=1*\x cm,yshift=-7*\y cm]
          \draw[thick] (0*\x,0.5*\y) -- (0*\x,4.5*\y); 
          \fill (0*\x,2.5*\y) circle (.05); 
          \draw[thick] (0.5*\x,0.5*\y) -- (0.5*\x,4.5*\y); 
          \fill (0.5*\x,2.5*\y) circle (.05); 
          \draw[thick] (1*\x,0.5*\y) -- (1*\x,4.5*\y); 
          \fill (1*\x,2.5*\y) circle (.05); 
          \draw[thick] (1.5*\x,0.5*\y) -- (1.5*\x,4.5*\y); 
          \fill (1.5*\x,2.5*\y) circle (.05); 
          \path (.75*\x,-1*\y) node {$\scriptstyle{\bigsqcup (H, (0,0,i,0))}$};
          \node at (-.35*\x,4.1*\y) {$\scriptstyle{H, \bZ_5}$};
          \node at (-.35*\x,2.4*\y) {$\scriptstyle{O, \bZ_5^2}$};
        \end{scope}

        \begin{scope}[xshift=4*\x cm, yshift=-7*\y cm]
          \fill (0*\x,1*\y) circle (.05); 
          \fill (0*\x,2*\y) circle (.05); 
          \fill (0*\x,3*\y) circle (.05); 
          \fill (0*\x,4*\y) circle (.05); 
          \draw[snake=brace] (-0.1*\x,0.9*\y) -- node[auto] {$\scriptstyle{4}$} (-0.1*\x,4.1*\y);
          \path (-0.2*\x,4.5*\y) node {$\scriptstyle{O, \bZ_5 \times D_{10}}$};

          \fill (1*\x,1*\y) circle (.05); 
          \fill (1*\x,2*\y) circle (.05); 
          \path (1*\x,3.2*\y) node {$\scriptscriptstyle{\vdots}$};
          \fill (1*\x,4*\y) circle (.05); 
          \draw[snake=brace] (0.9*\x,0.9*\y) -- node[auto] {$\scriptstyle{16}$} (0.9*\x,4.1*\y);
          \path (0.9*\x,4.5*\y) node {$\scriptstyle{O, \bZ_5^2}$};

          \fill (2*\x,1*\y) circle (.05); 
          \fill (2*\x,2*\y) circle (.05); 
          \path (2*\x,3.2*\y) node {$\scriptscriptstyle{\vdots}$};
          \fill (2*\x,4*\y) circle (.05); 
          \draw[snake=brace] (1.9*\x,0.9*\y) -- node[auto] {$\scriptstyle{20}$} (1.9*\x,4.1*\y);
          \path (1.9*\x,4.5*\y) node {$\scriptstyle{O, \bZ_5 \times \bZ_2}$};

          \fill (3*\x,1*\y) circle (.05); 
          \fill (3*\x,2*\y) circle (.05); 
          \fill (3*\x,3*\y) circle (.05); 
          \fill (3*\x,4*\y) circle (.05); 
          \draw[snake=brace] (2.9*\x,0.9*\y) -- node[auto] {$\scriptstyle{4}$} (2.9*\x,4.1*\y);
          \path (3*\x,4.5*\y) node {$\scriptstyle{O, \bZ_5}$};

          \fill (4*\x,1*\y) circle (.05); 
          \fill (4*\x,2*\y) circle (.05); 
          \path (4*\x,3.2*\y) node {$\scriptscriptstyle{\vdots}$};
          \fill (4*\x,4*\y) circle (.05); 
          \draw[snake=brace] (3.9*\x,0.9*\y) -- node[auto] {$\scriptstyle{48}$} (3.9*\x,4.1*\y);
          \path (4*\x,4.5*\y) node {$\scriptstyle{O, \bZ_4}$};

          \draw[snake=brace] (4.2*\x,0.5*\y) -- node[below] {$\scriptstyle{92}$} (-0.2*\x,0.5*\y);
          \path (2*\x,-1*\y) node {$\scriptstyle{\bigsqcup (O, (i_2,i_3,i_4,h))}$};
        \end{scope}        
      \end{tikzpicture}\]
      In particular we observe that the special point $O$ inside the component $(P, (0,0,0,h))$ is not really special, since its automorphisms group is the same as the one of $P$.
      
    \subsection{Torsion of order four} 
      We proceed in the same way as before, using Figure~\ref{fig:z4}. This time, all automorphisms live in the subgroup $H_4 \cong \bZ_4^3 \ltimes \bZ_2$ of $\bP\GL(8)$. The isomorphism associates to $(i_1, i_3, j_1, h)$ the matrix \[ \diag(\xi^{2 i_1}, 1, \xi^{2 i_3}, \xi^{i_3}, \xi^{i_1+i_3}, \xi^{i_1}, \xi^{j_1}, \xi^{-j_1}) P_{\sigma^h}\text{,}\] where $\sigma$ is the permutation $(1,3)(4,6)(7,8)$. Inside $H_4$, $G$ is generated by $(1,3,1,0)$.

      We obtain the following presentation in $H/G$:
      \begin{align*}
        \Aut(S_{\stamod_4}) &= \langle (0,0,0,0) \rangle\text{,}\\
        \Aut(S_{R_1}) &= \langle (2,2,0,0) \rangle\text{,}\\
        \Aut(S_W) &= \langle (0,0,0,1) \rangle\text{,}\\
        \Aut(S_{R_4}) &= \langle (2,2,0,0), (0,0,0,1) \rangle &&= \langle \Aut(S_W), \Aut(S_{R_1}) \rangle\text{,}\\
        \Aut(S_{R_{2,3}}) &= \langle (0,2,0,0) \rangle\text{,}\\
        \Aut(S_{S_4}) &= \langle (1,1,0,0) \rangle\text{,}\\
        \Aut(S_{S_5}) &= \langle (2,2,0,0), (0,2,0,0) \rangle &&= \langle \Aut(S_{R_1}), \Aut(S_{R_{2,3}}) \rangle\text{,}\\
        \Aut(S_{S_6}) &= \langle (1,1,0,0), (0,0,0,1) \rangle &&= \langle \Aut(S_W), \Aut(S_{S_4}) \rangle\text{,}\\
        \Aut(S_{S_7}) &= \langle (2,2,0,0), (0,2,0,0), (0,0,0,1) \rangle &&= \langle \Aut(S_W), \Aut(S_{S_5}) \rangle\text{,}\\
        \Aut(S_{S_{2,3}}) &= \langle (0,1,0,0) \rangle\text{,}\\
        \Aut(S_{T_{2,3}}) &= \langle (0,1,0,0), (2,2,0,0) \rangle &&= \langle \Aut(S_{S_4}), \Aut(S_{R_1}) \rangle\text{.}
      \end{align*}
      
      Now we can write the inertia stack:
      \begin{multline*}
        \I(\stamod_4) = (\stamod_4, (0,0,0,0)) \sqcup (R_1, (2,2,0,0)) \sqcup (W, (0,0,0,1)) \sqcup\\
        \sqcup (R_{2,3}, (0,2,0,0)) \sqcup (R_4, (2,2,0,1)) \sqcup (S_5, (0,2,2,0)) \sqcup\\
        \sqcup \bigsqcup_{\mathclap{i \in \{1,3\}}} (S_4, (i,i,0,0)) \sqcup \bigsqcup_{\mathclap{i \in \{1,3\}}} (S_{2,3}, (0,i,0,0)) \sqcup \bigsqcup_{\mathclap{i \in \{1,3\}}} (S_6, (i,i,0,1)) \sqcup\\
        \sqcup \bigsqcup_{\mathclap{i \in \{0,2\}}} (S_7, (0,2,i,1)) \sqcup \bigsqcup_{\mathclap{i \in \{1,3\}}} (T_{2,3}, (2,i,0,0))\text{.}
      \end{multline*}

      We do not try to draw the components, since there are many more than in the case $\nu = 5$ and more scattered through the various dimensions. We still have to show what are the automorphisms groups of the subcomponents of the components of the inertia stack. Again, these are trivially the original automorphisms groups if this is abelian; so the only case to study is $S_7$. Table~\ref{table:s7 automorphisms} sums up the results gathered with a \name{GAP} program similar to the one previously used.
      
      \begin{table}
        \begin{tabular}{ll}
          \toprule
          Component & $\Aut(S_7)$\\
          \midrule[\heavyrulewidth]
          $(\stamod_4, (0,0,0,0))$ & $D_8$\\
          $(R_1, (2,2,0,0))$ & $D_8$\\
          $(W, (0,0,0,1))$ & $\bZ_2^2$\\
          $(R_{2,3}, (0,2,0,0))$ & $\bZ_2^2$\\
          $(R_4, (2,2,0,1))$ & $\bZ_2^2$\\
          $(S_5, (0,2,2,0))$ & $\bZ_2^2$\\
          $(S_7, (0,2,2,1))$ & $\bZ_4$\\
          $(S_7, (0,2,0,1))$ & $\bZ_4$\\
          \bottomrule
        \end{tabular}
        \caption{Automorphisms groups for the $S_7$ subcomponents.}\label{table:s7 automorphisms}
      \end{table}

  \bibliographystyle{amsalpha}
  \bibliography{Godeaux1}
\end{document}